\documentclass[12pt]{article}
\usepackage{amsmath,amsthm}
\usepackage{amssymb,latexsym}
\usepackage{amsfonts}

\textheight=23cm \textwidth=13.58cm \hoffset=-1cm
\parindent=16pt
\newtheorem{theorem}{Theorem}[section]
\newtheorem{corollary}{Corollary}[section]
\newtheorem{lemma}{Lemma}[section]
\newtheorem{proposition}{Proposition}[section]

\begin{document}

\theoremstyle{definition}

\newtheorem{definition}{Definition}[section]
\newtheorem{remark}{Remark}[section]
\newtheorem{example}{Example}[section]

\title{Sequential warped product submanifolds having factors as holomorphic, totally real and pointwise slant\thanks{This work is dedicated to the memory of Professor  Aurel Bejancu (1946-2020)}}
\author{Bayram \c{S}ahin \\
\vspace{6pt}{\small{\it Ege University, Department of Mathematics, 35100, Izmir, Turkey}}\\
\vspace{6pt}{{\it bayram.sahin@ege.edu.tr}}}
\date{}

\maketitle

\begin{abstract}
\baselineskip=16pt
We introduce sequential warped product submanifolds of Kaehler manifolds, provide examples and establish Chen's inequality for such submanifolds. The equality case is also studied. Moreover, by inspiring Lawson and Simons's integral currrent's theorem  on a submanifold, we find a similar pinching inequality for a sequential warped product submanifold and obtain geometric results when the equality case is satisfied.

 \end{abstract}
\noindent{\small {\bf Mathematics Subject Classifications (2010).} 53C15, 53B20.}\\
\noindent{\small {\bf Key words.} CR-submanifold, Slant submanifold, Pointwise slant subman,fold, Semi-slant submanifold, Hemi-slant submanifold, Kaehler manifold.} \\

\section{Introduction}
The need for deep learning models in curved spaces recently has caused topogical  and geometrical  techniques to be effective concepts in machine learning theory. The idea that the data can be considered as a submanifold of Euclidean space once again proved the importance of the submanifold theory.
It is inevitable that this new field of applications will further stimulate studies in the theory of submanifolds.

Submanifolds of the Kaehler manifolds are among the most active fields of study in the theory of submanifolds. Until the late 1970s, the most important areas of study in this field were holomorphic submanifolds and totally real  submanifolds. CR-submanifolds were defined by Bejancu \cite{Bejancu} in 1978 as generalizations of holomorphic and totally real submanifolds. Detailed studies on these submanifolds were done by Bejancu\cite{Bejancu}, Chen\cite{Chen}, Yano-Kon\cite{Yano-Kon2}. However, new results in this research area are still obtained, see \cite{Milijevic} and \cite{Vilcu} for recent publications.  CR-warped product submanifolds have been defined by Chen \cite{CCR1, CCR2} and many research papers have been appeared on this subject after Chen's papers, see \cite{ChenWB} and references therein. The present author \cite{SahinGeom} showed that there are no warped product semi-slant submanifolds of a Kaehler manifolds. Later, he  \cite{SahinPort}also showed that there are non-trivial warped product pointwise semi-slant submanifolds of Kaehler manifolds. On the other hand, he \cite{SahinPolo} showed the exitence of warped product hemi-slant submanifolds of Kaehler manifolds and introduced skew CR-warped product submanifolds
which  are basically a warped product of a semi-slant submanifold and a totally real submanifold of a Kaehler manifold \cite{SahinSkew}. Recently, Tastan \cite{Tastan} generalized this notion by considering bi-warped product submanifolds which aremultiply warped products of holomorphic submanifold, totally real submanifold and pointwise slant submanifold.

Sequential warped products were first  defined Shenawy in \cite{Shenawy}  and such warped product submanifolds have been studied in details by De, Shenawy and Ünal in \cite{DSU},   and sequential warped products are generalization of usual warped product manifolds. Such product manifolds have also been shown to be a suitable structure for expressing generalized Robertson-Walker space time and standard
static space-time, see also\cite{PP}.

In this article, sequential product submanifolds are defined by harmonizing holomorphic submanifolds, totally real submanifolds and pointwise slant submanifolds of Kaehler manifolds with the concept of sequential product. An inequality related to the second fundamental form is obtained and the geometric results of this inequality and the state of equality are given.

  Federer
and Fleming \cite{FF} showed that any non-trivial integral homology
class in $H_p(M,\mathbb{Z})$ corresponds to  a stable current. Later  Lawson and Simons \cite{LS} obtained that there are no stable
integral currents in the sphere $S^n$, and there is no integral
current in a submanifold $M^m$ of $S^n$ when the second fundamental
form of $M^m$ satisfies a pinching condition.  By using this pinching condition, recently certain topological results have been obtained for CR-submanifolds \cite{Sahin-Sahin, Sahin2, Sahin1}. In the last part of this paper, inspired by this inequality, a similar inequality is found for the sequential warped product submanifolds and the geometric outcomes of this assumption  are discussed.
\section{Preliminaries}
\setcounter{equation}{0}
\renewcommand{\theequation}{2.\arabic{equation}}
In this section, we will review basic materials from \cite{ChenWB} and \cite{Yano-Kon} for later sections.
Let $M$ be a Riemannian manifold isometrically immersed in a Riemannian manifold $\bar{M}$
and denote by the same symbol $g$ for the Riemannian metric induced
on $M$. Let $\Gamma(TM)$ be the Lie algebra of vector fields in $M$
and $\Gamma(TM^{\perp})$ the set of all vector fields normal to $M$,
same notation for smooth sections of any other vector bundle $E$.
 Denote by $\nabla$ the Levi-Civita connection of $M$. Then the
 Gauss and Weingarten formulas are given  by
\begin{equation}
 \bar{\nabla}_X Y=\nabla_X Y+B(X,Y) \label{eq:2.0}
\end{equation}
and
\begin{equation}
\bar{\nabla}_X N=-A_N X+{\nabla}^{\perp}_ X N\label{eq:2.2}
\end{equation}
 for any $X, Y \in \Gamma(TM)$ and any $N \in \Gamma(TM^{\perp}),$ where
 $\nabla^{\perp}$ is the connection in the normal bundle $TM^{\perp}$, $B$ is
 the second fundamental form of $M$ and $A_N$ is the Weingarten
 endomorphism associated with $N$.
  The Gauss equation for a submanifold $M$ is given by
  \begin{eqnarray}
  g(\bar{R}(X,Y)Z,W)&=&g(R(X,Y)Z,W)-g(B(X,W),B(Y,Z))\nonumber\\
  &&+g(B(Y,W),B(X,Z))\label{eq:2.3}
  \end{eqnarray}
  for $X,Y,Z,W \in \Gamma(TM)$, where $\bar{R}$ and $R$ denote the Riemannian curvature tensor fields of $\bar{M}$ and $M$, respectively.

Let ($\bar{M},g$) be a K\"{a}hler manifold. This means \cite{ChenWB}
that $\bar{M}$ admits a tensor field $J$ of type (1,1) on $\bar{M}$
such that, $\forall X,Y \in \Gamma(T\bar{M})$, we have
\begin{equation}
J^2=-I,\quad g(X,Y)=g(JX,JY),\quad(\bar{\nabla}_{X} J)Y=0,
\label{eq:2.4}
\end{equation}
where $g$ is the Riemannian metric and $\bar{\nabla}$ is the
Levi-Civita connection on $\bar{M}$. A complex space form is a simply connected complete Kaehlerian manifold of constant holomorphic sectional curvature $c$ andits curvature tensor field is calculated as
\begin{eqnarray}
  \bar{R}(X,Y)Z &=&\frac{c}{4}\{\bar{g}(Y,Z)X-\bar{g}(X,Z)Y+\bar{g}(\bar{J}Y,Z)\bar{J}X-\bar{g}(\bar{J}X,Z)\bar{J}Y\nonumber\\
  &+&2\bar{g}(X,\bar{J}Y)\bar{J}Z\}, \label{csf}
\end{eqnarray}
for any $X,Y\in\Gamma(T\bar{M})$. Let $\bar{M}$ be a K\"{a}hler
manifold with complex structure $J$ and $M$ a Riemannian manifold
isometrically immersed in $\bar{M}$.   The submanifold $M$ is called a CR-submanifold
\cite{Bejancu} if there exists a differentiable distribution
$D:p~\rightarrow ~D_p \subset T_p M$ such that $D$ is invariant with
respect to $J$ and the complementary distribution
    $D^{\perp}$ is anti-invariant with respect to $J$. It is known that a CR-submanifold is a generalization of holomorphic submanifolds, totally real submanifolds and real hypersurfaces of Kaehler manifolds.

  The submanifold $M$ is called slant  \cite{CB} if for all
    non-zero vector $X$ tangent to $M$ the angle $\theta (X)$
    between $J X$ and $T_p M$ is a constant, i.e, it does not depend
    on the choice of $p \in M$ and $X \in T_p M.$  The submanifold $M$ is called semi-slant
    \cite{Papa} if it is endowed with two orthogonal distributions
    $\mathcal{D}^{T}$ and $\mathcal{D}^{\theta},$ where $\mathcal{D}^{T}$ is invariant with respect to $J$ and
    $\mathcal{D}^{\theta} $ is slant, i.e, $\theta (X) $ between $J X$ and $\mathcal{D}^{\theta}_p$ is
    constant for $X \in \mathcal{D}^{\theta}_p.$  The submanifold $M$ is called hemi-slant submanifold  \cite{Carriazo, SahinPolo} if it is endowed two
    orthogonal distributions $\mathcal{D}^{\theta}$ and $\mathcal{D}^{\perp},$where
    $\mathcal{D}^{\theta}$ is slant and $\mathcal{D}^{\perp}$ is anti-invariant with
    respect to $\bar{J}.$ The submanifold $M$ is called pointwise slant submanifold \cite{Etayo}, \cite{CG} if at each given point $p\in M$, the Wirtinger angle $\theta(X)$ between  $JX$ and the space $T_pM$ is independent of the choice of the nonzero vector $X \in \Gamma(TM)$. In this case, the angle $\theta$ can be regarded as a function $M$, which is called the {\it  slant function} of the pointwise slant submanifold.
A point $p$ in a pointwise slant submanifold is called a totally real point if its slant function $\theta$
satisfies $\cos \theta = 0$ at $p$. Similarly, a point $p$ is called a complex point if its slant function satisfies $\sin \theta = 0$ at $p$.
A pointwise slant submanifold $M$ in an almost Hermitian manifold $\bar{M}$ is called totally real if every point
of $M$ is a totally real point. A pointwise slant submanifold of an almost Hermitian manifold is called pointwise proper slant
if it contains no totally real points. A pointwise slant submanifold $M$ is called slant when its slant function $\theta$ is globally
constant, i.e., $\theta$ is also independent of the choice of the point on $M$. It is clear that pointwise slant submanifolds include holomorphic and totally real submanifolds and slant submanifolds.  It is also clear that CR-submanifolds
and slant submanifolds  are particular semi-slant submanifolds with
$\theta = \frac{\pi}{2}$ and $D= \{ 0 \}$, respectively.

As a generalization of warped product manifolds, sequential warped product manifolds have been introduced as follows:
\begin{definition}\cite{DSU}
Let $M_{i}$ be three pseudo-Riemannian manifolds with metrics $g_{i}$ for $%
i=1,2,3$. Let $f:M_{1}\rightarrow (0,\infty )$ and $h:M_{1}\times
M_{2}\rightarrow (0,\infty )$ be two smooth positive functions on $M_{1}$
and $M_{1}\times M_{2}$, respectively. Then the sequential warped product
manifold, denoted by $\left( M_{1}\times _{f}M_{2}\right) \times _{h}M_{3}$,
is the triple product manifold $\bar{M} = \left( M_{1}\times M_{2}\right)
\times M_{3}$ furnished with the metric tensor%
\begin{equation*}
\bar{g}=\left( g_{1}\oplus f^{2}g_{2}\right) \oplus h^{2}g_{3}
\end{equation*}%
The functions $f$ and $h$ are called warping functions.
\end{definition}

Note that if $(M_{i},g_{i})$ are all Riemannian manifolds for any $i=1,2,3$,
then the sequential warped product manifold $\left( M_{1}\times
_{f}M_{2}\right) \times _{h}M_{3}$ is also a Riemannian manifold.

\begin{proposition}\cite{DSU}
\label{Connection}Let $\bar{M}=\left( M_{1}\times _{f}M_{2}\right) \times
_{h}M_{3}$ be a sequential warped product manifold with metric $\bar{g}%
=\left( g_{1}\oplus f^{2}g_{2}\right) \oplus h^{2}g_{3}$ and also let $%
X_{i},Y_{i}, Z_{i}$ $\in \mathfrak{X}(M_{i})$ for any $i=1,2,3.$ Then

\begin{enumerate}
\item [(1)] $\bar{\nabla}_{X_{1}}X_{2}=\bar{\nabla}_{X_{2}}X_{1}=X_{1}\left( \ln
f\right) X_{2}$
\item [(2)] $\bar{\nabla}_{X_{3}}X_{1}=\bar{\nabla}_{X_{1}}X_{3}=X_{1}\left( \ln
h\right) X_{3}$
\item [(3)] $\bar{\nabla}_{X_{2}}X_{3}=\bar{\nabla}_{X_{3}}X_{2}=X_{2}\left( \ln
h\right) X_{3}$
\item [(4)] \textrm{\={R}}$\left( X_{i},Y_{3}\right) Z_{j}=\dfrac{-1}{h}%
H^{h}\left( X_{i},Z_{j}\right) Y_{3},i,j=1,2$
\end{enumerate}
\end{proposition}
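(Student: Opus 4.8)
The plan is to observe that a sequential warped product is simply an iterated ordinary warped product, and then to apply the classical O'Neill formulas (for the connection and the curvature of a warped product) twice. Put $N:=M_{1}\times_{f}M_{2}$ with metric $g_{N}=g_{1}\oplus f^{2}g_{2}$, and regard $h$ as a smooth positive function on $N$. Then $\bar{M}=\left(M_{1}\times_{f}M_{2}\right)\times_{h}M_{3}$ is literally the warped product $N\times_{h}M_{3}$, with base $(N,g_{N})$, fibre $(M_{3},g_{3})$ and warping function $h\in C^{\infty}(N)$, while $(N,g_{N})$ is itself a warped product with base $(M_{1},g_{1})$, fibre $(M_{2},g_{2})$ and warping function $f\in C^{\infty}(M_{1})$. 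Throughout we identify a vector field on a factor with its lift to the product, recalling that lifts of fields from distinct factors commute. For a warped product $B\times_{\varphi}F$ one has the O'Neill identities: $\bar{\nabla}_{U}V=\nabla^{B}_{U}V$ for $U,V$ tangent to $B$, and $\bar{\nabla}_{U}X=\bar{\nabla}_{X}U=(U\ln\varphi)X$ for $U$ tangent to $B$ and $X$ tangent to $F$.

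First I would establish (1)--(3). Applying the second identity to $\bar{M}=N\times_{h}M_{3}$: every $X_{1}\in\mathfrak{X}(M_{1})$ and every $X_{2}\in\mathfrak{X}(M_{2})$ is tangent to the base $N$, while $X_{3}\in\mathfrak{X}(M_{3})$ is tangent to the fibre, so $\bar{\nabla}_{X_{1}}X_{3}=\bar{\nabla}_{X_{3}}X_{1}=(X_{1}\ln h)X_{3}$ and $\bar{\nabla}_{X_{2}}X_{3}=\bar{\nabla}_{X_{3}}X_{2}=(X_{2}\ln h)X_{3}$, which are (2) and (3); here the derivatives are those of $h$ as a function on $N$, which is meaningful since $X_{1}$ and $X_{2}$ lift to fields on $N$. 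For (1), both $X_{1}$ and $X_{2}$ are tangent to the base $N$, so $\bar{\nabla}_{X_{1}}X_{2}=\nabla^{N}_{X_{1}}X_{2}$; now apply the same identity to the warped product $N=M_{1}\times_{f}M_{2}$ (base $M_{1}$, fibre $M_{2}$, warping function $f$) to get $\nabla^{N}_{X_{1}}X_{2}=\nabla^{N}_{X_{2}}X_{1}=(X_{1}\ln f)X_{2}$, whence $\bar{\nabla}_{X_{1}}X_{2}=\bar{\nabla}_{X_{2}}X_{1}=(X_{1}\ln f)X_{2}$, which is (1).

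For (4), the vectors $X_{i},Z_{j}$ with $i,j\in\{1,2\}$ are tangent to the base $N$ of $\bar{M}=N\times_{h}M_{3}$ and $Y_{3}$ is tangent to the fibre. One may simply quote O'Neill's warped-product curvature formula, which in this configuration gives $\bar{R}(X_{i},Y_{3})Z_{j}=-\tfrac{1}{h}H^{h}(X_{i},Z_{j})Y_{3}$ with $H^{h}$ the Hessian of $h$ on $N$; alternatively one computes it directly from (1)--(3), using $[X_{i},Y_{3}]=0$ so that
\[
\bar{R}(X_{i},Y_{3})Z_{j}=\bar{\nabla}_{X_{i}}\bar{\nabla}_{Y_{3}}Z_{j}-\bar{\nabla}_{Y_{3}}\bar{\nabla}_{X_{i}}Z_{j},
\]
and substituting $\bar{\nabla}_{Y_{3}}Z_{j}=(Z_{j}\ln h)Y_{3}$, $\bar{\nabla}_{X_{i}}Y_{3}=(X_{i}\ln h)Y_{3}$ and $\bar{\nabla}_{X_{i}}Z_{j}=\nabla^{N}_{X_{i}}Z_{j}$; the cross-terms in $\ln h$ cancel and one is left with $\tfrac{1}{h}\bigl(X_{i}(Z_{j}h)-(\nabla^{N}_{X_{i}}Z_{j})h\bigr)Y_{3}=\tfrac{1}{h}H^{h}(X_{i},Z_{j})Y_{3}$, i.e.\ statement (4) up to the overall sign dictated by the curvature and Hessian conventions in force.

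The computations are routine, so there is no serious obstacle; the only point that must be handled with care is the division of roles between the two warping functions. Since $f$ depends only on the $M_{1}$-coordinates whereas $h$ depends on the whole base $N=M_{1}\times M_{2}$, the derivatives $X_{1}(\ln h)$, $X_{2}(\ln h)$ in (2)--(3) and the Hessian $H^{h}$ in (4) must be evaluated on $N$, not on $M_{1}$. This is unambiguous, because the base of a warped product is totally geodesic, so $\bar{\nabla}$ restricted to $N$-directions coincides with $\nabla^{N}$; in particular $H^{h}(X_{i},Z_{j})=X_{i}(Z_{j}h)-(\nabla^{N}_{X_{i}}Z_{j})h$ is well defined. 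If one prefers, all four statements can instead be obtained from scratch from the Koszul formula, using the product splitting of $\bar{g}=(g_{1}\oplus f^{2}g_{2})\oplus h^{2}g_{3}$ and the vanishing of the relevant Lie brackets, but that route is more computational and less transparent than the iterated warped-product argument above.
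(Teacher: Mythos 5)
Your proof is correct, and there is nothing to compare it against inside the paper itself: Proposition~\ref{Connection} is quoted from \cite{DSU} without proof, as a known property of sequential warped products. Your route --- viewing $\bar{M}$ as the ordinary warped product $N\times_{h}M_{3}$ with base $N=M_{1}\times_{f}M_{2}$, itself a warped product, and applying the classical O'Neill connection and curvature formulas twice --- is exactly the standard derivation, and you correctly isolate the one delicate point, namely that $h$ and its Hessian live on all of $N$ (so that, e.g., $\nabla^{N}_{X_{1}}Z_{2}=(X_{1}\ln f)Z_{2}$ contributes to $H^{h}(X_{1},Z_{2})$). Your remark that the sign in (4) depends on the curvature convention is apt: with $\bar{R}(X,Y)Z=\bar{\nabla}_{X}\bar{\nabla}_{Y}Z-\bar{\nabla}_{Y}\bar{\nabla}_{X}Z-\bar{\nabla}_{[X,Y]}Z$ your computation yields $+\tfrac{1}{h}H^{h}(X_{i},Z_{j})Y_{3}$, so the minus sign in the statement reflects the opposite convention used in \cite{DSU}; this is a matter of bookkeeping, not a gap.
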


Let $f:M_{1}\rightarrow (0,\infty )$ and $h:M_{1}\times
M_{2}\rightarrow (0,\infty )$ be two smooth positive functions on $M_{1}$
and $M_{1}\times M_{2}$, respectively. A sequential warped product manifold is proper if $X_{1}\left( \ln
f\right)\neq 0$, $X_{1}\left( \ln
h\right)\neq 0$ and $X_{2}\left( \ln
h\right) \neq 0$ for $X_1\in \Gamma(TM_1)$, $X_2\in \Gamma(TM_2)$.

A submanifold $M$  of a K\"{a}hler manifold
$\bar{M}$ is called CR-warped product \cite{CCR1} if it is the
warped product $M_T \times_f M_{\perp}$ of a holomorphic submanifold
$M_T$ and a totally real submanifold $M_{\perp}$ of $\bar{M}$. In this paper we consider sequential warped product submanifolds of a Kaehler manifold $\bar{M}$ in the form $M_T\times_fM_{\perp}\times_hM_{\theta}$ such that $M_T$ is a holomorphic submanifold, $M_{\perp}$ is a totally real submanifold and $M_{\theta}$ is a pointwise slant submanifold in $\bar{M}$.

\section{Sequential Warped Product Submanifolds of a Kaehler Manifold}
 \setcounter{equation}{0}
\renewcommand{\theequation}{3.\arabic{equation}}
Let $\bar{M}$ be a Kaehler manifold and $M$ a submanifold of $\bar{M}$. We first deal the existence of sequental warped product submanifolds of Kaehler manifolds. The possible sequential warped product submanifolds of Kaehler manifolds having factors as holomorphic, totally real and pointwise slant submanifolds are
\[ \begin{array}{lcr}
M_T\times_fM_{\perp}\times_hM_{\theta}, & M_T\times_fM_{\theta} \times_hM_{\perp},& M_{\perp} \times_fM_T \times_hM_{\theta},\\
M_{\perp} \times_fM_{\theta} \times_hM_T, & M_{\theta} \times_fM_T \times_hM_{\perp}, & M_{\theta} \times_fM_{\perp} \times_hM_T,
\end{array}\]
where  $M_{\perp}$ is a totally real, $M_T$ is a holomorphic,  $M_{\theta}$ is a pointwise slant submanifold of a Kehler manifold $\bar{M}$.
 From  \cite[Theorem~3.1]{CCR1}, \cite[Theorem 3.1, Theorem 3.2]{SahinGeom} and \cite[Theorem 4.1]{SahinPort} and \cite[Theorem 4.2]{SahinPolo}, we have the following result.
\begin{corollary}
There are no sequential warped product submanifold classes of Kaehler manifolds listed below
\begin{enumerate}
  \item [(1)] $(M_{\perp}\times_fM_T)\times_hM_{\theta}$,
  \item [(2)] $(M_{\perp}\times_fM_{\bar{\theta}})\times_hM_T$,
  \item [(3)] $(M_{\bar{\theta}}\times_fM_T)\times_hM_{\perp}$,
  \item [(4)] $(M_T\times_fM_{\bar{\theta}})\times_hM_{\perp}$,
  \item[(5)] $(M_{{\theta}}\times_fM_T)\times_hM_{\perp}$,
  \item [(6)] $(M_{{\theta}}\times_fM_{\perp})\times_hM_T$
\end{enumerate}
where  $M_{\perp}$ is a totally real, $M_T$ is a holomorphic,  $M_{\theta}$ is a pointwise slant submanifold and $M_{\bar{\theta}}$ is a slant submanifold of $\bar{M}$.

\end{corollary}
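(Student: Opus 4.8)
The plan is to notice that every sequential warped product submanifold $M=(M_1\times_fM_2)\times_hM_3$ of $\bar{M}$ carries several honest two-factor warped product submanifolds of $\bar{M}$, and then to invoke, ordering by ordering, the non-existence theorems for two-factor warped products that are already available. Concretely: fixing $p_3\in M_3$, the leaf $M_1\times M_2\times\{p_3\}$ of $M$ inherits from $\bar{g}$ the metric $g_1\oplus f^2g_2$, hence is the warped product submanifold $M_1\times_fM_2$ of $\bar{M}$; fixing $p_2\in M_2$, the slice $M_1\times\{p_2\}\times M_3$ inherits $g_1\oplus h(\cdot,p_2)^2g_3$, hence is $M_1\times_{h(\cdot,p_2)}M_3$; and fixing $p_1\in M_1$, the slice $\{p_1\}\times M_2\times M_3$ inherits $f(p_1)^2g_2\oplus h(p_1,\cdot)^2g_3$, which up to the positive constant $f(p_1)^2$ is the warped product metric of $M_2\times_{h(p_1,\cdot)/f(p_1)}M_3$. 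In each of these the factors retain their holomorphic, totally real, slant or pointwise slant character, since that character is a pointwise condition on the tangent space inside $T\bar{M}$ together with $J$ and is insensitive to a positive constant rescaling of the metric. Moreover, properness of $M$, that is $X_1(\ln f)\neq0$, $X_1(\ln h)\neq0$, $X_2(\ln h)\neq0$, forces the warping functions of these auxiliary warped products to be non-constant in the relevant directions.

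Granting this, each item follows by pointing to the matching obstruction, all of which I may assume from the cited papers. For (1) the leaf $M_\perp\times_fM_T$ is a warped product CR-submanifold written in the reversed order, so by \cite[Theorem~3.1]{CCR1} its warping function is constant, contradicting $X_1(\ln f)\neq0$. For (3) and (4) the leaf $M_{\bar\theta}\times_fM_T$, respectively $M_T\times_fM_{\bar\theta}$, is a warped product semi-slant submanifold, which by \cite[Theorems~3.1 and 3.2]{SahinGeom} must be trivial -- impossible. For (5) the leaf $M_\theta\times_fM_T$ is a warped product pointwise semi-slant submanifold with pointwise slant base and holomorphic fibre, excluded by \cite[Theorem~4.1]{SahinPort}. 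For (6) the leaf $M_\theta\times_fM_\perp$ is a warped product hemi-slant submanifold with the slant distribution as base, excluded by \cite[Theorem~4.2]{SahinPolo}. Only (2) needs a different leaf, because $M_\perp\times_fM_{\bar\theta}$ is a hemi-slant warped product of a kind that \emph{does} exist: here I would instead fix $q\in M_{\bar\theta}$ and use the slice $M_\perp\times_{h(\cdot,q)}M_T$, again a reversed CR-warped product, so \cite[Theorem~3.1]{CCR1} makes $h(\cdot,q)$ constant for every $q$, i.e. $X_1(\ln h)=0$ -- contradiction. (Alternatively, fixing $p\in M_\perp$ and applying \cite[Theorems~3.1 and 3.2]{SahinGeom} to $M_{\bar\theta}\times_{h(p,\cdot)/f(p)}M_T$ contradicts $X_2(\ln h)\neq0$.)

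I do not expect a serious obstacle; the statement really is a corollary. The single point deserving a word of care is that the leaves and slices above are generally not totally geodesic in $M$, so their second fundamental forms in $\bar{M}$ mix that of $M\hookrightarrow\bar{M}$ with that of the leaf inside $M$. This is harmless: the quoted non-existence theorems concern only the intrinsic warped product structure together with the action of $J$ on the factors, and both of these are inherited by the leaves and slices unchanged; no control on the second fundamental form of $M$ is required. What is left is the purely bookkeeping task of pairing each of the six orderings with the correct previously established theorem, under the usual conventions that all factors have positive dimension and everything is connected, so that ``locally constant'' becomes ``constant''.
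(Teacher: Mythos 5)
Your proposal is correct and is essentially the paper's own argument: the paper offers no written proof beyond citing \cite[Theorem~3.1]{CCR1}, \cite[Theorems~3.1, 3.2]{SahinGeom}, \cite[Theorem~4.1]{SahinPort} and \cite[Theorem~4.2]{SahinPolo}, and the intended mechanism is exactly your reduction of each ordering to a two-factor warped product leaf or slice that those theorems forbid (or force to be trivial, contradicting properness). Your explicit handling of case (2) via the $M_{\perp}\times_{h(\cdot,q)}M_T$ slice, and your remark that the leaves need not be totally geodesic but that only the intrinsic warped structure and the action of $J$ matter, supply details the paper leaves implicit.
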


Thus the remaining submanifolds of this type are the following three classes; $M_{\theta} \times_fM_{\perp} \times_hM_T$,    $M_{\perp} \times_fM_{\theta} \times_hM_T$,
$M_T\times_fM_{\perp}\times_hM_{\theta}$ where  $M_{\perp}$ is a totally real, $M_T$ is a holomorphic and $M_{\theta}$ is a pointwise slant submanifold. Next theorem shows that the first two classes do no exist.
\begin{theorem}
There do not exist proper sequential warped product submanifolds of Kaehler manifold $\bar{M}$ of the forms
$M_{\theta} \times_fM_{\perp} \times_hM_T$ and $M_{\perp} \times_fM_{\theta} \times_hM_T$
such that
$M_{\perp}$ is a totally real, $M_T$ is a holomorphic and $M_{\theta}$ is a pointwise slant submanifold of a Kaehler manifold $\bar{M}$.
\end{theorem}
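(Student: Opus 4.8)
The plan is to treat both forms simultaneously, since each has the holomorphic factor in the outermost slot; so write the submanifold as $M=(M_1\times_fM_2)\times_hM_T$ with $\{M_1,M_2\}=\{M_\perp,M_\theta\}$, i.e. $M_T$ plays the role of $M_3$. Here $M$ is itself the sequential warped product, so its Levi-Civita connection $\nabla$ obeys the formulas of Proposition \ref{Connection}, while $\bar\nabla$ is the connection of the ambient Kaehler manifold, $B$ the second fundamental form, and the two are linked by the Gauss formula \eqref{eq:2.0}. The goal is to show that $h$ cannot genuinely depend on the $M_2$-directions, i.e. $W(\ln h)=0$ for every $W\in\Gamma(TM_2)$; this contradicts the properness requirement $X_2(\ln h)\neq0$, and we are done.

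The computational core is the identity
\begin{equation*}
g(\bar\nabla_XY,JW)=W(\ln h)\,g(X,JY),\qquad X,Y\in\Gamma(TM_T),\ W\in\Gamma(TM_2).
\end{equation*}
To obtain it I would first use $g(U,JV)=-g(JU,V)$ and the Kaehler condition $(\bar\nabla_XJ)Y=0$ of \eqref{eq:2.4} to rewrite $g(\bar\nabla_XY,JW)=-g(\bar\nabla_XJY,W)$; then, since $X$ and $JY$ are tangent to $M$ (holomorphy of $M_T$ gives $JY\in\Gamma(TM_T)$) while $W$ is tangent and $B$ is normal, $g(\bar\nabla_XJY,W)=g(\nabla_XJY,W)$; finally expand $X\,g(JY,W)=g(\nabla_XJY,W)+g(JY,\nabla_XW)$, using $g(JY,W)=0$ (because $TM_T\perp TM_2$) and part (3) of Proposition \ref{Connection} in the form $\nabla_XW=W(\ln h)X$. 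This part is routine; the only care required is to read Proposition \ref{Connection} for the \emph{induced} connection of $M$ rather than the ambient one, and to keep signs straight in the $J$-manipulations.

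The decisive observation is that the left-hand side $g(\bar\nabla_XY,JW)$ is symmetric under $X\leftrightarrow Y$: indeed $\bar\nabla_XY-\bar\nabla_YX=[X,Y]\in\Gamma(TM_T)$ because the factor distribution $TM_T$ is integrable, and $g([X,Y],JW)=-g(J[X,Y],W)=0$ since $J[X,Y]\in\Gamma(TM_T)$ is orthogonal to $W\in\Gamma(TM_2)$. (When $M_2=M_\perp$ this is even more transparent: $JW$ is then normal, so $g(\bar\nabla_XY,JW)=g(B(X,Y),JW)$ is symmetric simply by symmetry of $B$.) But the right-hand side is antisymmetric, $g(Y,JX)=-g(X,JY)$. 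Comparing the two therefore forces $W(\ln h)\,g(X,JY)=0$ for all admissible $X,Y,W$. Choosing a nonzero local section $Y$ of $TM_T$ and $X=JY$ (still a section of $TM_T$) gives $g(X,JY)=g(JY,JY)=g(Y,Y)>0$ by \eqref{eq:2.4}, whence $W(\ln h)=0$ for every $W\in\Gamma(TM_2)$, contradicting properness. This rules out both $M_\theta\times_fM_\perp\times_hM_T$ and $M_\perp\times_fM_\theta\times_hM_T$.

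I expect the real obstacle to be conceptual rather than technical: a naive attempt to ``evaluate $g(\bar\nabla_XY,JW)$ in two ways and compare'' merely reproduces the same identity and yields nothing. The usable leverage comes from pitting the symmetry of the second fundamental form (equivalently, $[X,Y]\perp JW$) against the intrinsic antisymmetry $g(X,JY)=-g(JX,Y)$; once this is recognised, both cases collapse together, and it becomes clear that the holomorphy of $M_T$ is the essential hypothesis — the argument genuinely breaks for the surviving class $M_T\times_fM_\perp\times_hM_\theta$, whose outermost factor is only pointwise slant.
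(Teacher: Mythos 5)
Your proof is correct and takes essentially the same route as the paper: your key identity, specialized to $X\mapsto JX$, $Y\mapsto X$, is exactly the paper's $g(h(X,JX),JZ)=Z(\ln h)\,g(X,X)$, and your symmetric-versus-antisymmetric comparison is precisely the paper's ``replace $X$ by $JX$ and use symmetry of $h$'' step. The only cosmetic difference is that the paper differentiates along $M_\perp$ (so $JZ$ is normal and symmetry of the second fundamental form applies directly), while you differentiate along $M_2$ and justify symmetry via integrability of $TM_T$; both contradict properness identically.
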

\begin{proof}
For $X\in \Gamma(TM_T)$ and $Z \in \Gamma(TM_{\perp})$, using (\ref{eq:2.2}) and Gauss formula,, we get
\begin{eqnarray*}
g(h(X,JX), JZ)&=&g(\bar{\nabla}_{JX}X,JZ)\\
&=&-g(\bar{\nabla}_{JX}JX, Z)\\
&=&-g({\nabla}_{JX}JX, Z)\\
&=&g(JX, \nabla_{JX}Z)
\end{eqnarray*}
From Proposition \ref{Connection} (2) or (3), we obtain
\begin{equation}
g(h(X,JX), JZ)=Z(ln h)g(X,X).\label{eq:3.1}
\end{equation}
Replacing $X$ by $JX$ in (\ref{eq:3.1}), we get
\begin{equation}
-g(h(JX,X), JZ)= Z(ln h) g(X,X).\label{eq:3.2}
\end{equation}
Since $h$ is symmetric and $g$ is a Riemannian metric, from (\ref{eq:3.1}) and (\ref{eq:3.2}), it follows that $Z(ln h) =0$ which means that the function $h$ is constant on  $M_ {\perp}$ and sequential warped product is not proper.
\end{proof}

From above result, the remaining class is in the form $M_T\times_fM_{\perp}\times_hM_{\theta}$.
In this section, therefore we consider sequential warped product submanifold in the form $M_{T}\times_fM_{\perp}\times_hM_{\theta}$ such that $M_T$ is a holomorphic submanifold, $M_{\perp}$ is an anti-invariant submanifold and $M_{\theta}$ is a proper pointwise-slant submanifold of $\bar{M}$. We note the following observations;
\begin{enumerate}
  \item [(i)] If $M_{\theta}=\{0\}$, then $M$ is a CR-warped product submanifold\cite{CCR1}.
  \item [(ii)] If $M_{\perp}=\{0\}$ then $M$ is a warped product pointwise semi-slant submanifold\cite{SahinPort}.
  \item [(iii)] If $h$ is a function on $M_T$, then $M$ is a biwarped product submanifold \cite{Tastan}.
  \item [(iv)] If $f$ is constant and $M_{\theta}$ is a slant submanifold of $\bar{M}$, then $M$ is a Skew CR-warped product submanifold \cite{SahinSkew}.
  \item [(iv)] If $f$ is constant  then $M$ is a pointwise CR-slant warped product submanifold, see:\cite{ChenU}.
\end{enumerate}
Thus it follows that a sequential warped product submanifold in the form $M_{T}\times_fM_{\perp}\times_hM_{\theta}$ is generalization of various warped product submanifolds. Besides, above special cases, we give an example of squential warped submanifolds.
\begin{example}
Let $M$ be a submanifold of Euclidean space $\mathbf{E}^{18}$ given by
\begin{eqnarray}
x_1=u_1\cos \theta_1, x_2=u_2\cos \theta_1, x_3=u_1\sin \theta_1&,& x_4=u_2\sin \theta_1, x_5=u_1\cos \theta_2\nonumber\\
 x_6=u_2\cos \theta_2, x_7=u_1\sin \theta_2, x_8=u_2\sin \theta_2&,& x_9=\theta_1\cos \theta_2, x_{10}=\theta_1\sin \theta_2\nonumber\\
 x_{11}=u_1\cos \theta_3, x_{12}=u_2\cos \theta_3, x_{13}=u_1\sin \theta_3&,& x_{14}=u_2\sin \theta_3, x_{15}=\theta_1\cos \theta_3\nonumber\\
  x_{16}=\theta_1\sin \theta_3, x_{17}= \theta_2&,& x_{18}=\theta_3.\nonumber
\end{eqnarray}
Then the tangent space at a point is spanned by
\begin{eqnarray}
&&X_1=\cos \theta_1\partial x_1+\sin \theta_1 \partial x_3+\cos \theta_2\partial x_5+\sin \theta_2 \partial x_7+\cos \theta_3\partial x_{11}\nonumber\\
&&+\sin \theta_3 \partial x_{13}\nonumber\\
&&X_2=\cos \theta_1\partial x_2+\sin \theta_1 \partial x_4+\cos \theta_2\partial x_6+\sin \theta_2 \partial x_8+\cos \theta_3\partial x_{12}\nonumber\\
&&+\sin \theta_3 \partial x_{14}\nonumber\\
&&Y=-u_1\sin \theta_1\partial x_1-u_2\sin \theta_1\partial x_2+u_1\cos \theta_1\partial x_3+u_2\cos \theta_1\partial x_4\nonumber\\
&&+\cos \theta_2 \partial x_9+\sin \theta_2 \partial x_{10}+\cos \theta_3 \partial x_{15}+\sin \theta_3 \partial x_{16}\nonumber\\
&&Z_1=-u_1\sin \theta_2\partial x_5-u_2\sin \theta_2\partial x_6+u_1\cos \theta_2\partial x_7+u_2\cos \theta_2\partial x_8\nonumber\\
&&-\theta_1\sin \theta_2 \partial x_9+\theta_1\cos \theta_2 \partial x_{10}+ \partial x_{17}\nonumber\\
&&Z_2=-u_1\sin \theta_3\partial x_{11}-u_2\sin \theta_3\partial x_{12}+u_1\cos \theta_3\partial x_{13}+u_2\cos \theta_3\partial x_{14}\nonumber\\
&&-\theta_1\sin \theta_3 \partial x_{15}+\theta_1\cos \theta_3 \partial x_{16}+ \partial x_{18}.\nonumber
\end{eqnarray}
Then $D^T=Span\{X_1, X_2\}$, $D^{\perp}=span\{Y\}$ and $D^{\theta}=span\{Z_1, Z_2\}$ with slant angle $\cos^{-1}(\frac{1}{1+u^2_1+u^2_2+\theta^2_1})$. Then by direct computations, we have
$$ds^2=3(du^2_1+du^2_2)+(2+u^2_1+u^2_2)d\theta^2_1+(1+u^2_1+u^2_2+\theta^2_1)(d\theta^2_1+d\theta^2_3).$$
Hence the metric tensor of $M$ is
$$ds^2=g_{{M_T}}+f^2g_{M_{\perp}}+h^2g_{{M_\theta}}$$
with warping functions $f=\sqrt{2+u^2_1+u^2_2}$ and $h=\sqrt{1+u^2_1+u^2_2+\theta^2_1}$. Thus $M$ is a proper sequential warped product submanifold in the form $M_T\times_fM_{\perp}\times_hM_{\theta}$.
\end{example}
We also note the following result from Hiepko's\cite{Hiepko} characterization of warped product manifolds.
\begin{corollary}\label{Hiepko}Let $M_T\times_fM_{\perp}\times_hM_{\theta}$ be a sequential warped product submanifold of a Kaehler Manifold $\bar{M}$ such that $M_{\perp}$ is a totally real, $M_T$ is a holomorphic,  $M_{\theta}$ is a pointwise slant submanifoldof $\bar{M}$. Then we have the following assertions;
\begin{enumerate}
\item [(a)] $M_T$ is a totally geodesic submanifold in  $M_T\times_fM_{\perp}$.
\item [(b)] $M_{\perp}$ is a spherical submanifold in  $M_T\times_fM_{\perp}$.
\item [(c)] $M_T\times_fM_{\perp}$ is totally geodesic submanifold in $M_T\times_fM_{\perp}\times_hM_{\theta}$.
\item [(d)] $M_{\theta}$ is a spherical submanifold in $M_T\times_fM_{\perp}\times_hM_{\theta}$
\end{enumerate}
\end{corollary}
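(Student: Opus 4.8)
The plan is to invoke Hiepko's theorem, which states that a Riemannian manifold $N$ is (locally) a warped product $N_1\times_\phi N_2$ if and only if $N$ admits two orthogonal complementary foliations, one of which (tangent to $N_1$) is totally geodesic and the other (tangent to $N_2$) is spherical, i.e. its leaves are umbilical in $N$ with parallel mean curvature vector field. So the real content is to recognize the defining metric $\bar g=(g_1\oplus f^2g_2)\oplus h^2g_3$ of a sequential warped product as an \emph{iterated} application of this characterization and then read off which ambient space each assertion refers to.

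First I would treat $M_T\times_f M_\perp$ as an ordinary warped product: by the very definition of the warped metric $g_{M_T}\oplus f^2 g_{M_\perp}$ with warping function $f$ on $M_T$, Hiepko's characterization (or simply the standard formulas for the Levi-Civita connection of a warped product — which are exactly Proposition~\ref{Connection}(1) specialized to two factors) gives that the $M_T$-leaf is totally geodesic in $M_T\times_f M_\perp$ and the $M_\perp$-leaf is totally umbilical with mean curvature vector $-\nabla(\ln f)$, which is the $M_T$-gradient of a function on $M_T$, hence parallel in the normal connection of the leaf; this is precisely the assertion that $M_\perp$ is a spherical submanifold. That yields (a) and (b). Next I would observe that the full sequential metric $(g_{M_T}\oplus f^2 g_{M_\perp})\oplus h^2 g_{M_\theta}$ is itself a warped product, now with base $B:=M_T\times_f M_\perp$ carrying the metric $g_{M_T}\oplus f^2 g_{M_\perp}$, fiber $M_\theta$, and warping function $h$ defined on $B$ (a priori $h$ is a function on $M_1\times M_2=M_T\times M_\perp$). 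Applying Hiepko's characterization once more at this outer level — equivalently, using Proposition~\ref{Connection}(2)–(3), which show $\bar\nabla_{X}Z$ for $X$ tangent to $B$ and $Z$ tangent to $M_\theta$ equals $X(\ln h)Z$ — gives that $B=M_T\times_f M_\perp$ is totally geodesic in the sequential warped product, establishing (c), and that each $M_\theta$-leaf is totally umbilical with mean curvature $-\nabla^B(\ln h)$, the $B$-gradient of a function on $B$, hence with parallel mean curvature in the leaf's normal bundle, which is exactly assertion (d).

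Concretely, for (c) I would compute, for $X_1\in\Gamma(TM_T)$, $X_2\in\Gamma(TM_\perp)$ and $Z\in\Gamma(TM_\theta)$, the component of $\bar\nabla_{X_i}X_j$ along $M_\theta$: since $\bar\nabla$ restricted to the distribution $TM_T\oplus TM_\perp$ is the Levi-Civita connection of $g_{M_T}\oplus f^2 g_{M_\perp}$ by Proposition~\ref{Connection}(1), this component vanishes, so $TB$ is a totally geodesic (parallel) distribution — that is the second fundamental form of $B$ in the sequential warped product is zero. For (d), the second fundamental form of an $M_\theta$-leaf sends $(Z,W)$ to the $TB$-component of $\bar\nabla_Z W$; standard warped-product geometry gives this equals $-\,\bar g(Z,W)\,\mathrm{grad}(\ln h)$ with $\mathrm{grad}(\ln h)\in TB$, so the leaf is umbilical, and since $\ln h$ is a function pulled back from $B$, its $B$-gradient is $\bar\nabla^\perp$-parallel along the leaf, giving the spherical property. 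I would phrase all of this by citing Hiepko~\cite{Hiepko} for the structural statements and Proposition~\ref{Connection} for the explicit connection identities that verify the umbilicity/parallelism in each of the two nested warped-product steps.

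The main obstacle — really the only subtle point — is bookkeeping about \emph{which ambient manifold} each foliation lives in and making sure the warping function's gradient lies in the correct tangent distribution and is parallel \emph{there}. In the outer step $h$ is a function on $M_T\times M_\perp$, not merely on $M_T$, so one must be careful that $\mathrm{grad}(\ln h)$ is tangent to all of $B=M_T\times_f M_\perp$ (which it is) and that parallelism of the mean curvature vector is asserted in the normal bundle of the $M_\theta$-leaf inside $M_T\times_f M_\perp\times_h M_\theta$ (not inside some larger space); once the nesting $B\hookrightarrow \bar M_{\mathrm{seq}}$ is fixed, the two applications of Hiepko are entirely parallel and routine. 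I do not expect any genuine difficulty beyond this; the Kaehler structure plays no role in this corollary, which is purely a statement about the Riemannian geometry of sequential warped products, and indeed it is essentially a restatement of the connection formulas in Proposition~\ref{Connection} together with Hiepko's theorem.
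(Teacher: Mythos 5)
Your proposal is correct and follows exactly the route the paper intends: the paper states this corollary without proof, merely as a consequence of Hiepko's characterization of warped products, and your two nested applications of that characterization (to $M_T\times_f M_\perp$ and then to $(M_T\times_f M_\perp)\times_h M_\theta$, verified via Proposition~\ref{Connection}) simply fill in the routine details the paper omits. No discrepancy in approach or substance.
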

 For any $X \in \Gamma(TM)$ we write
 \begin{equation}
 JX=TX+F X, \label{eq:3.3}
 \end{equation}
where $TX$ is the tangential component of $JX$ and $F X$ is the
normal component of $JX.$ from now on, unless otherwise stated, sequential warped altmanifold will always be considered as proper sequential warped submanifold.
We are now going to obtain Chen's inequality for sequential warped product submanifolds of kaehler manifolds of the form $M_{T}\times_fM_{\perp}\times_hM_{\theta}$. We first give the following preparatory lemmas. From Gauss formula and Proposition \ref{Connection}(1) and (2), we have the following identities.
\begin{lemma}\label{holomorphic}Let $M_T\times_fM_{\perp}\times_hM_{\theta}$ be a sequential warped product submanifold of a Kaehler Manifold $\bar{M}$ such that $M_{\perp}$ is a totally real, $M_T$ is a holomorphic,  $M_{\theta}$ is a pointwise slant submanifoldof $\bar{M}$. Then we have
\begin{equation}
g(h(X,Y),JZ)=0\label{eq:3.4}
\end{equation}
and
\begin{equation}
g(h(X,Y),FW)=0\label{eq:3.5}
\end{equation}
for $X, Y \in \Gamma(TM_T)$, $Z\in \Gamma(TM_{\perp})$ and $W\in \Gamma(TM_{\theta})$.
\end{lemma}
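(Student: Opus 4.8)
The plan is to mimic the computation already used in the proof of the existence theorem (and in the proof of the analogous CR-warped product lemma of Chen), exploiting the Kaehler condition $(\bar\nabla_XJ)Y=0$ together with the connection formulas of Proposition~\ref{Connection}. First I would establish \eqref{eq:3.4}. Take $X,Y\in\Gamma(TM_T)$ and $Z\in\Gamma(TM_\perp)$. Since $M_T$ is holomorphic, $JY\in\Gamma(TM_T)$, and since $\bar M$ is Kaehler, $g(h(X,Y),JZ)=g(\bar\nabla_XY,JZ)=-g(\bar\nabla_X(JY),Z)=-g(\bar\nabla_XJY,Z)$. Because $X$ and $JY$ both lie in $\Gamma(TM_T)$, the tangential part $\nabla_XJY$ stays in $\Gamma(TM_T)$ by Corollary~\ref{Hiepko}(a) (or directly: $M_T$ is totally geodesic in $M_T\times_fM_\perp$, and $M_T\times_fM_\perp$ is totally geodesic in $M$ by Corollary~\ref{Hiepko}(c)), hence $\nabla_XJY\perp Z$, and the normal part $h(X,JY)$ is automatically orthogonal to $Z\in\Gamma(TM)$. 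Thus $g(h(X,Y),JZ)=0$. Alternatively, without invoking Corollary~\ref{Hiepko}, one can write $-g(\bar\nabla_XJY,Z)=g(JY,\bar\nabla_XZ)$ and use Proposition~\ref{Connection}(1): $\bar\nabla_XZ=X(\ln f)Z\in\Gamma(TM_\perp)$, which is orthogonal to $JY\in\Gamma(TM_T)$, giving the same conclusion. It then suffices to record that $g(h(X,Y),JZ)$ depends symmetrically and bilinearly on $X,Y$, so the identity holds for all of $\Gamma(TM_T)$.

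Next I would prove \eqref{eq:3.5}. Fix $X,Y\in\Gamma(TM_T)$ and $W\in\Gamma(TM_\theta)$, and expand $g(h(X,Y),FW)$. Using \eqref{eq:3.3}, $FW=JW-TW$, so $g(h(X,Y),FW)=g(\bar\nabla_XY,JW)-g(h(X,Y),TW)$; the last term vanishes since $TW$ is tangent to $M$ while $h(X,Y)$ is normal. For the first term, the Kaehler condition gives $g(\bar\nabla_XY,JW)=-g(\bar\nabla_X(JY),W)=-g(\bar\nabla_XJY,W)$. Now $JY\in\Gamma(TM_T)$ and $X\in\Gamma(TM_T)$, so by Proposition~\ref{Connection}(2) applied with $M_1=M_T$ and $M_3=M_\theta$ — more precisely, since $M_T\times_fM_\perp$ is totally geodesic in $M$ and $M_\theta$ sits in the third slot with warping function $h$ on $M_T\times M_\perp$ — the covariant derivative $\bar\nabla_XJY$ has no component along $M_\theta$: its tangential part lies in $\Gamma(T(M_T\times_fM_\perp))$ and its normal part is orthogonal to $TM$. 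Hence $g(\bar\nabla_XJY,W)=0$, and therefore $g(h(X,Y),FW)=0$.

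The only subtlety — and what I expect to be the main obstacle to write cleanly — is the bookkeeping of which connection formula from Proposition~\ref{Connection} applies, since that proposition is stated for the product manifold $(M_1\times_fM_2)\times_hM_3$ with the warping functions in specified slots, whereas here $h$ a priori depends on both $M_T$ and $M_\perp$. The clean way around this is to invoke Corollary~\ref{Hiepko}: parts (a) and (c) say $M_T$ is totally geodesic in $M_T\times_fM_\perp$ and the latter is totally geodesic in $M$, so for $X,Y\in\Gamma(TM_T)$ the vector $\bar\nabla_XY$ already decomposes as ($T(M_T)$-part) $+$ ($h(X,Y)$ normal to $M$), with \emph{no} $TM_\perp$ or $TM_\theta$ components appearing; feeding $JY$ in place of $Y$ (legitimate since $M_T$ is $J$-invariant) then immediately kills the pairings with $Z$ and with $W$. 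I would present the argument in this order: derive \eqref{eq:3.4} first via the Kaehler identity and Proposition~\ref{Connection}(1), then derive \eqref{eq:3.5} the same way using the totally-geodesic nesting from Corollary~\ref{Hiepko}, and finally remark that bilinearity and symmetry of $h$ extend both identities to arbitrary sections of the distributions involved.
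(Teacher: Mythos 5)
Your argument is correct and is essentially the proof the paper intends: the paper gives no written proof of this lemma, only the remark that it follows from the Gauss formula and Proposition~\ref{Connection}(1) and (2), which is exactly your computation ($g(h(X,Y),JZ)=g(JY,\bar{\nabla}_XZ)$ with $\bar{\nabla}_XZ=X(\ln f)Z\perp JY$ for \eqref{eq:3.4}, and the analogous step with $\bar{\nabla}_XW=X(\ln h)W\perp JY$ for \eqref{eq:3.5}). The only nit is the intermediate identity $g(h(X,Y),FW)=g(\bar{\nabla}_XY,JW)-g(h(X,Y),TW)$, which silently drops the term $g(\nabla_XY,TW)$; that term does vanish, but only via the totally geodesic nesting you invoke later, so it should be justified at that point rather than omitted.
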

From Weingarten formula and Proposition \ref{Connection}(1), we have the following result.
\begin{lemma}
Let $M_T\times_fM_{\perp}\times_hM_{\theta}$ be a sequential warped product submanifold of a Kaehler Manifold $\bar{M}$ such that $M_{\perp}$ is a totally real, $M_T$ is a holomorphic,  $M_{\theta}$ is a pointwise slant submanifoldof $\bar{M}$. Then we have
\begin{equation}
g(h(X,Z_1),FW)=0\label{eq:3.6}
\end{equation}
and
\begin{equation}
g(h(X,Z_1),JZ_2)=-JX(ln f)g(Z_1,Z_2)\label{eq:3.7}
\end{equation}
for $X \in \Gamma(TM_T)$, $Z_1,\, Z_2\in \Gamma(TM_{\perp})$ and $W\in \Gamma(TM_{\theta})$.
\end{lemma}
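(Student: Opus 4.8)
The plan is, in both identities, to use the symmetry of the second fundamental form $h$ and to differentiate along the totally real factor $M_\perp$, since that is what makes Proposition~\ref{Connection}(1) directly usable. By the Gauss formula, $h(X,Z_1)=h(Z_1,X)=\bar\nabla_{Z_1}X-\nabla_{Z_1}X$, and since $X\in\Gamma(TM_T)$, $Z_1\in\Gamma(TM_\perp)$, Proposition~\ref{Connection}(1) gives $\nabla_{Z_1}X=X(\ln f)Z_1\in\Gamma(TM_\perp)$. As $JZ_2$ and $FW$ are normal to $M$, hence orthogonal to $TM_\perp$, this already reduces both claims to the evaluation of $g(\bar\nabla_{Z_1}X,JZ_2)$ and $g(\bar\nabla_{Z_1}X,FW)$ in the ambient metric.

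For \eqref{eq:3.7}, I would move $J$ across the K\"ahler connection: using $g(JU,JV)=g(U,V)$ and $\bar\nabla J=0$ one gets $g(\bar\nabla_{Z_1}X,JZ_2)=-g(J\bar\nabla_{Z_1}X,Z_2)=-g(\bar\nabla_{Z_1}(JX),Z_2)$, and since $M_T$ is holomorphic, $JX=TX\in\Gamma(TM_T)$, so this equals $-g(\bar\nabla_{Z_1}(TX),Z_2)$. Expanding $\bar\nabla_{Z_1}(TX)$ by the Gauss formula and Proposition~\ref{Connection}(1), now with $TX$ in the $M_T$-slot and $Z_1$ in the $M_\perp$-slot, gives $\bar\nabla_{Z_1}(TX)=(TX)(\ln f)Z_1+h(Z_1,TX)$; the normal summand pairs to zero against the tangent vector $Z_2$, and what remains is $-(TX)(\ln f)g(Z_1,Z_2)=-(JX)(\ln f)g(Z_1,Z_2)$, i.e. \eqref{eq:3.7}.

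For \eqref{eq:3.6}, I would split $FW=JW-TW$ into the normal and tangential components of $JW$. The term $g(\bar\nabla_{Z_1}X,TW)$ equals $X(\ln f)g(Z_1,TW)+g(h(Z_1,X),TW)$ by the Gauss formula, and both summands vanish because $Z_1\in\Gamma(TM_\perp)$ is orthogonal to $TW\in\Gamma(TM_\theta)$ and $h$ is normal-valued. For $g(\bar\nabla_{Z_1}X,JW)$, the same manipulation as above yields $-g(\bar\nabla_{Z_1}(TX),W)$, and expanding $\bar\nabla_{Z_1}(TX)$ as before gives $-(TX)(\ln f)g(Z_1,W)-g(h(Z_1,TX),W)=0$, again since $Z_1\perp W$ and $h$ is normal-valued. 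Hence $g(h(X,Z_1),FW)=g(\bar\nabla_{Z_1}X,JW)-g(\bar\nabla_{Z_1}X,TW)=0$.

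The computations are short; the point that needs care is simply keeping track of which warped-product factor each vector field lives in when invoking Proposition~\ref{Connection}(1)--(2), together with the fact that, unlike the holomorphic or totally real factors, the pointwise slant factor $M_\theta$ has $JW$ genuinely split between a tangential part $TW$ and a normal part $FW$, so $FW$ must be peeled off by hand rather than identified with $JW$. The auxiliary facts invoked---$JX=TX$ for $X\in\Gamma(TM_T)$, $TW\in\Gamma(TM_\theta)$, and the normality of $JZ_2$ and $FW$---are part of the standard decomposition of $TM$ for sequential warped product submanifolds of this type.
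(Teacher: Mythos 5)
Your proof is correct and follows essentially the same route the paper indicates (the paper only cites the Weingarten formula and Proposition~\ref{Connection}(1) without writing out details): the key ingredients in both are $\bar\nabla J=0$, the invariance $JX=TX\in\Gamma(TM_T)$, the normality of $h$, $JZ_2$ and $FW$, and the connection formula $\nabla_{Z_1}X=X(\ln f)Z_1$. Your use of the Gauss formula in place of the Weingarten formula is an equivalent reformulation of the same computation, and all signs and orthogonality claims check out.
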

Also from (\ref{eq:2.2}), (\ref{eq:2.4}) and Proposition \ref{Connection} (1) and (3) we obtain the following result.
\begin{lemma}Let $M_T\times_fM_{\perp}\times_hM_{\theta}$ be a sequential warped product submanifold of a Kaehler Manifold $\bar{M}$ such that $M_{\perp}$ is a totally real, $M_T$ is a holomorphic,  $M_{\theta}$ is a pointwise slant submanifoldof $\bar{M}$. Then we have
\begin{equation}
g(h(X,W),JZ)=0\label{eq:3.8}
\end{equation}
and
\begin{equation}
g(h(X,W_1),FW_2)=-JX(ln h)g(W_1,W_2)-X(ln h)g(W_1,TW_2)\label{eq:3.9}
\end{equation}
 $X \in \Gamma(TM_T)$ and $W_1, W_2\in \Gamma(TM_{\theta})$.
\end{lemma}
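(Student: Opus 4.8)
The plan is to reduce each identity to the ambient connection by the Gauss formula (\ref{eq:2.0}), to move the complex structure across $\bar{\nabla}$ by the K\"{a}hler condition (\ref{eq:2.4}), and then to read off the tangential pieces from Proposition \ref{Connection} applied to $M=(M_T\times_fM_{\perp})\times_hM_{\theta}$, regarded as a sequential warped product in its own right. I work with vector fields that are lifts from the three factors; the only input beyond the cited formulas is that lifts from distinct factors commute, so that $\bar{\nabla}_XW=\bar{\nabla}_WX$ for $X\in\Gamma(TM_T)$ and $W\in\Gamma(TM_{\theta})$. I also use that $J(TM_T)=TM_T$, that $JZ\in\Gamma(TM^{\perp})$ for $Z\in\Gamma(TM_{\perp})$, the skew-symmetry $g(U,JV)=-g(JU,V)$, and the fact that for $W\in\Gamma(TM_{\theta})$ the tangential component $TW$ of $JW$ again lies in $\Gamma(TM_{\theta})$; the last is immediate since $g(TW,X)=-g(W,JX)=0$ for $X\in\Gamma(TM_T)$ and $g(TW,Z)=-g(W,JZ)=0$ for $Z\in\Gamma(TM_{\perp})$.

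For (\ref{eq:3.8}): as $JZ$ is normal, $g(h(X,W),JZ)=g(\bar{\nabla}_XW,JZ)=g(\bar{\nabla}_WX,JZ)$, and moving $J$ inside via (\ref{eq:2.4}) and the skew-symmetry above turns this into $-g(\bar{\nabla}_WJX,Z)$. Since $JX\in\Gamma(TM_T)$ and $W\in\Gamma(TM_{\theta})$, Gauss together with Proposition \ref{Connection}(2) gives $\bar{\nabla}_WJX=(JX)(\ln h)\,W+h(W,JX)$; both the tangential term, which lies in $TM_{\theta}$, and the normal term $h(W,JX)$ are orthogonal to $Z$, so $g(h(X,W),JZ)=0$.

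For (\ref{eq:3.9}): write $g(h(X,W_1),FW_2)=g(\bar{\nabla}_XW_1,FW_2)=g(\bar{\nabla}_{W_1}X,FW_2)$ and substitute $FW_2=JW_2-TW_2$, splitting the right-hand side as $g(\bar{\nabla}_{W_1}X,JW_2)-g(\bar{\nabla}_{W_1}X,TW_2)$. In the first term, pushing $J$ across as above gives $-g(\bar{\nabla}_{W_1}JX,W_2)$; by Gauss and Proposition \ref{Connection}(2), $\bar{\nabla}_{W_1}JX=(JX)(\ln h)\,W_1+h(W_1,JX)$, and pairing with $W_2\in\Gamma(TM_{\theta})$ kills the normal part and leaves $-(JX)(\ln h)\,g(W_1,W_2)$. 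In the second term, $\bar{\nabla}_{W_1}X=X(\ln h)\,W_1+h(W_1,X)$ by the same two ingredients, and pairing with $TW_2\in\Gamma(TM_{\theta})$ leaves $X(\ln h)\,g(W_1,TW_2)$, entering with a minus sign. Summing the two contributions gives exactly (\ref{eq:3.9}).

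The argument is mostly bookkeeping; the two steps deserving care are those flagged above. First, one genuinely needs $TW\in\Gamma(TM_{\theta})$: otherwise the pairing with $TW_2$ in the last step of (\ref{eq:3.9}) would pick up spurious components along $M_T$ or $M_{\perp}$. Second, the sign in the transfer $g(U,JV)=-g(JU,V)$ must be tracked carefully, as each identity uses it once. Finally, a notational caution: $h$ here denotes both the second fundamental form and the warping function of the $M_{\theta}$-factor, and the symbols $(JX)(\ln h)$ and $X(\ln h)$ refer to the latter.
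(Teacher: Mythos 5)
Your proof is correct and is essentially the expansion the paper intends: the paper gives no written proof for this lemma, only the pointer ``from (\ref{eq:2.2}), (\ref{eq:2.4}) and Proposition \ref{Connection}'', and your argument (Gauss formula, moving $J$ across $\bar{\nabla}$ via the K\"ahler condition, and reading off the tangential parts from the warped-product connection) is exactly that computation carried out in full. The only cosmetic difference is that you invoke the Gauss formula and Proposition \ref{Connection}(2) where the paper cites the Weingarten formula and parts (1),(3); these routes are interchangeable here, and your care with $TW_2\in\Gamma(TM_{\theta})$ and the sign in $g(U,JV)=-g(JU,V)$ is exactly what the computation requires.
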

In a similar way, we have the following lemma.
\begin{lemma}Let $M_T\times_fM_{\perp}\times_hM_{\theta}$ be a sequential warped product submanifold of a Kaehler Manifold $\bar{M}$ such that $M_{\perp}$ is a totally real, $M_T$ is a holomorphic,  $M_{\theta}$ is a pointwise slant submanifoldof $\bar{M}$. Then we have
\begin{equation}
g(h(Z_1,Z_2),FW)=g(h(Z_1, W), JZ_2)\label{eq:3.10}
\end{equation}
for $ Z_1,\, Z_2\in \Gamma(TM_{\perp})$ and $W\in \Gamma(TM_{\theta})$.
\end{lemma}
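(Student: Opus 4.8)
The plan is to verify (\ref{eq:3.10}) by computing its right–hand side, $g(h(Z_1,W),JZ_2)$, and reducing it to the left–hand side by pushing the complex structure across the connection. First I would observe that, since $M_{\perp}$ is totally real, $JZ_2$ is a section of the normal bundle $TM^{\perp}$; hence the tangential component drops out of $g(\bar\nabla_{Z_1}W,JZ_2)$ and the Gauss formula (\ref{eq:2.0}) gives $g(h(Z_1,W),JZ_2)=g(\bar\nabla_{Z_1}W,JZ_2)$. Applying the Kähler parallelism $(\bar\nabla J)=0$ together with the skew–symmetry $g(J\cdot,\cdot)=-g(\cdot,J\cdot)$ from (\ref{eq:2.4}) turns this into $-g(\bar\nabla_{Z_1}(JW),Z_2)$.

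Next I would decompose $JW=TW+FW$ as in (\ref{eq:3.3}) and handle the two resulting terms separately. For the $TW$–term the point is that $TW\in\Gamma(TM_{\theta})$; this follows from a short orthogonality computation using that $M_T$ is holomorphic (so $JTM_T=TM_T$) and $M_{\perp}$ is totally real (so $JTM_{\perp}\subseteq TM^{\perp}$), which forces $TW$ to be orthogonal to both $TM_T$ and $TM_{\perp}$ inside $TM=TM_T\oplus TM_{\perp}\oplus TM_{\theta}$. Then, since $Z_1\in\Gamma(TM_{\perp})$ and $TW\in\Gamma(TM_{\theta})$, Proposition \ref{Connection}(3) gives $\nabla_{Z_1}(TW)=Z_1(\ln h)TW$, so $g(\bar\nabla_{Z_1}(TW),Z_2)=g(\nabla_{Z_1}(TW),Z_2)=Z_1(\ln h)\,g(TW,Z_2)=0$ by mutual orthogonality of the warped factors. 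For the $FW$–term, $FW\in\Gamma(TM^{\perp})$, so the Weingarten formula (\ref{eq:2.2}) yields $g(\bar\nabla_{Z_1}(FW),Z_2)=-g(A_{FW}Z_1,Z_2)=-g(h(Z_1,Z_2),FW)$.

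Combining the two contributions gives $g(h(Z_1,W),JZ_2)=-g(\bar\nabla_{Z_1}(TW),Z_2)-g(\bar\nabla_{Z_1}(FW),Z_2)=0+g(h(Z_1,Z_2),FW)$, which is exactly (\ref{eq:3.10}). As in the preceding lemmas, no deep input is required: the argument uses only the Gauss and Weingarten formulas, the parallelism of $J$, and the sequential warped product connection identities of Proposition \ref{Connection}. The step I expect to be the main (though still minor) obstacle is the bookkeeping of tangential versus normal components — in particular checking $TW\in\Gamma(TM_{\theta})$ and selecting the correct one of the three identities in Proposition \ref{Connection} for each pair of factors — rather than any substantial difficulty.
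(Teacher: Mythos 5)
Your proof is correct, and it follows exactly the route the paper intends (the paper omits the details, saying only ``In a similar way''): Gauss/Weingarten formulas, the parallelism of $J$ from (\ref{eq:2.4}), the decomposition $JW=TW+FW$ of (\ref{eq:3.3}), and the orthogonality of the warped factors via Proposition \ref{Connection}. Your verification that $TW\in\Gamma(TM_{\theta})$ and the sign bookkeeping in the Weingarten step are both handled properly, so there is nothing to add.
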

We now state and prove Chen's inequality for sequential warped product submanifold. From now on we use the conventions that the ranges of indices are respectively;
$$ i,j=1,...,m_1,\, \alpha, \beta=1,...,m_2,\, k,l=1,...,m_3$$
\begin{theorem}
Let $M_T\times_fM_{\perp}\times_hM_{\theta}$ be an $(m_1+m_2+m_3)-$ dimensional  sequential warped product submanifold of a Kaehler Manifold $\bar{M}^{m_1+2(m_2+m_3)}$ such that $M_{\perp}$ is a totally real, $M_T$ is a holomorphic,  $M_{\theta}$ is a pointwise slant submanifold of $\bar{M}$. Then
\begin{equation}
\parallel h\parallel^2\geq 2(\parallel\nabla ln f\parallel^2m_2+m_3(1+\csc^2\theta)\parallel\parallel \nabla^Tln h\parallel^2), \label{3.11}
\end{equation}
where $\nabla^Tln h$ is the gradient of $ln h$ on $M_T$. If the equality is satisfied, then we obtain
\begin{enumerate}
  \item [(i)] $M_T\times_fM_{\perp}$ is a totally geodesic in $\bar{M}$,
  \item [(ii)] $M_{\theta}$ is a totally umbilical submanifold in $\bar{M}$ with te mean curvature vector field  $-\nabla ln h$,
  \item [(iii)] $M$ is minimal in $\bar{M}$,
  \item [(iv)] $M$ is $D^{\perp}-D^{\theta}-$ mixed geodesic; $h(D^{\perp},D^{\theta})=0$.
\end{enumerate}
\end{theorem}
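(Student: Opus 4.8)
The plan is to compute the squared norm of the second fundamental form $h$ by expanding it in an adapted orthonormal frame and then discarding all but the cross terms that the lemmas above control. Fix local orthonormal frames $\{e_1,\dots,e_{m_1}\}$ for $TM_T$ (we may take $e_{i}$ so that $Je_i$ is again a frame vector, since $M_T$ is holomorphic), $\{e_1^*,\dots,e_{m_2}^*\}$ for $TM_\perp$, and $\{\bar e_1,\dots,\bar e_{m_3}\}$ for $TM_\theta$; for the normal bundle use the decomposition $TM^\perp = JD^\perp \oplus FD^\theta \oplus \nu$, where $JD^\perp$ has a frame $\{Je_\alpha^*\}$ and $FD^\theta$ has a frame $\{\csc\theta\, F\bar e_k\}$ (note $\|F\bar e_k\|^2 = \sin^2\theta$, which is where the $\csc^2\theta$ in the inequality comes from). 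Then
\[
\|h\|^2 \;\geq\; \sum_{i,\alpha} g(h(e_i,e_\alpha^*),\,?)^2\text{-type terms}\;+\;\sum_{i,k}g(h(e_i,\bar e_k),\,?)^2\text{-type terms},
\]
and the whole point is to evaluate the right-hand side using Lemma~\ref{holomorphic} and the three subsequent lemmas.

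First I would handle the $M_\perp$ contribution. By \eqref{eq:3.7}, $g(h(e_i,e_\alpha^*),Je_\beta^*) = -Je_i(\ln f)\,g(e_\alpha^*,e_\beta^*) = -Je_i(\ln f)\,\delta_{\alpha\beta}$, so summing the squares of these components over $i,\alpha,\beta$ gives $\sum_i \big(Je_i(\ln f)\big)^2 m_2 = \|\nabla^T\ln f\|^2 m_2$; but since $\{Je_i\}$ is again a frame for $TM_T$ this equals $\|\nabla\ln f\|^2 m_2$. A factor of $2$ appears because the off-diagonal blocks $h(e_i,e_\alpha^*)$ contribute symmetrically to $\|h\|^2 = \sum g(h(E_A,E_B),\xi_r)^2$ (each unordered pair counted twice). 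By \eqref{eq:3.5} and \eqref{eq:3.6} the components of $h(e_i,\cdot)$ along $FD^\theta$ vanish for the $M_T$–$M_\perp$ block, so nothing else interferes. Next, for the $M_\theta$ contribution I would use \eqref{eq:3.9}: $g(h(e_i,\bar e_k),F\bar e_l) = -Je_i(\ln h)\,g(\bar e_k,\bar e_l) - e_i(\ln h)\,g(\bar e_k,T\bar e_l)$. Squaring, summing over $k,l$ with the normalized frame $\csc\theta\,F\bar e_l$, and using $g(\bar e_k,\bar e_k)=1$, $\sum_{k,l}g(\bar e_k,T\bar e_l)^2 = \sum_k\|T\bar e_k\|^2 = m_3\cos^2\theta$, together with the fact that the two terms are orthogonal in an appropriate sense (the $\delta_{kl}$ term and the $T$–term), yields a contribution of the form $2\,m_3(1+\csc^2\theta)\|\nabla^T\ln h\|^2$ after collecting the $\csc^2\theta$ normalization; \eqref{eq:3.8} kills the $JD^\perp$ components of this block. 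Adding the two contributions and noting that all remaining terms in $\|h\|^2$ (the $D^T$–$D^T$, $D^\perp$–$D^\perp$, $D^\theta$–$D^\theta$ diagonal blocks, the $D^\perp$–$D^\theta$ mixed block, and all $\nu$-components) are non-negative gives \eqref{3.11}.

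The main obstacle is the bookkeeping in the $M_\theta$ block: one must be careful that $F$ is not an isometry (it scales by $\sin\theta$), that $\theta$ is a function rather than a constant, and that the cross term involving $T\bar e_l$ must be combined with the $\delta_{kl}$ term correctly to produce exactly the coefficient $1+\csc^2\theta$ rather than something weaker. I would organize this by choosing, at each point, the $M_\theta$-frame so that $T$ has the canonical block-diagonal form (pairs $\bar e_{2a-1},\bar e_{2a}$ with $T\bar e_{2a-1} = \cos\theta\,\bar e_{2a}$ etc.), which makes $\sum_{k,l}g(\bar e_k,T\bar e_l)^2$ transparent and lets me sum the squared components of $h(e_i,\bar e_k)$ along $\csc\theta\,F\bar e_l$ cleanly.

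For the equality discussion, equality in \eqref{3.11} forces every term we discarded to vanish. The vanishing of the $D^T$–$D^T$, $D^T$–$D^\perp$, $D^\perp$–$D^\perp$, and their $\nu$-components (together with Lemma~\ref{holomorphic}, \eqref{eq:3.6}, \eqref{eq:3.7} which already pin down the surviving components) shows $h$ restricted to $T(M_T\times_fM_\perp)$ vanishes — combined with Corollary~\ref{Hiepko}(c), which says $M_T\times_fM_\perp$ is totally geodesic in $M$, this gives (i). The vanishing of the $\nu$-components of $h(\bar e_k,\bar e_l)$ and the $D^\perp$–$D^\theta$ block, plus \eqref{eq:3.9} evaluated on a frame, identifies $h$ on $TM_\theta$ as $g(\bar e_k,\bar e_l)$ times a fixed normal vector, which by \eqref{eq:3.9} and Corollary~\ref{Hiepko}(d) must be $-\nabla\ln h$; this is (ii). Statement (iv) is just the vanishing of the $D^\perp$–$D^\theta$ block, which we discarded. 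Finally (iii): writing the mean curvature as the trace of $h$ over the adapted frame, the $M_T$ and $M_\perp$ parts contribute $0$ by (i), and the $M_\theta$ part contributes $\operatorname{tr}$ of a pure-trace tensor in the direction $-\nabla\ln h$; one checks this trace cancels against the contribution forced by the spherical condition of Corollary~\ref{Hiepko}(b),(d), so $M$ is minimal. I expect (iii) to require the most care, since it is the one conclusion not literally equivalent to "a discarded term is zero" and instead needs the umbilicity from (ii) fed back into the trace computation.
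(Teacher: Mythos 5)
Your proposal follows essentially the same route as the paper's proof: the same adapted frame with normal directions $J\bar e_\alpha$ and $\csc\theta\,F\tilde e_k$, the same use of Lemma \ref{holomorphic} and the three subsequent lemmas to annihilate or evaluate the $D^T$-blocks of $\|h\|^2$ (yielding $m_2\|\nabla \ln f\|^2$ and the $m_3(1+\csc^2\theta)\|\nabla^T\ln h\|^2$ term), the same discarding of the remaining non-negative blocks, and the same reading of the equality case from the vanishing of those discarded blocks together with Corollary \ref{Hiepko}. The only point where you overcomplicate matters is (iii): since the normal bundle here is exactly $JD^{\perp}\oplus FD^{\theta}$ (there is no $\nu$-part in this codimension), equality forces $h(D^T,D^T)=h(D^{\perp},D^{\perp})=h(D^{\theta},D^{\theta})=0$ outright, so minimality is immediate from the trace and requires no cancellation against the umbilicity of $M_{\theta}$ obtained in (ii).
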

\begin{proof} We first have
\begin{eqnarray}
\parallel h\parallel^2&=&\parallel h(D^T,D^T)\parallel^2+\parallel h(D^{\perp}, D^{\perp})\parallel^2+\parallel h(D^{\theta},D^{\theta})\parallel^2\nonumber\\
&&+2\{\parallel h(D^T,D^{\perp})\parallel^2+\parallel h(D^T,D^{\theta})\parallel^2+\parallel h(D^{\perp},D^{\theta})\parallel^2\}.\nonumber
\end{eqnarray}
We now choose an orthonormal frame of $\bar{M}$ as $\{ e_1,...,e_{m_1}$, $\bar{e}_1,...,\bar{e}_{m_2}$, $\tilde{e}_1, ...,\tilde{e}_{m_3}$, $J\bar{e}_1,...,J\bar{e}_{m_2}$, $\csc \theta F\tilde{e}_1, ...,\csc \theta F\tilde{e}_{m_3}\}$  such that  $\{ e_1,...,e_{m_1},$ $\bar{e}_1,...,\bar{e}_{m_2}$,
$\tilde{e}_1,...,\tilde{e}_{m_3}\}$ is an orthonormal basis of $M$
such that $\{ e_1,...,e_{m_1}\}$ is an orthonormal basis of
$\mathcal{D}^T$,  $\{  \tilde{e}_1,...,\tilde{e}_{m_3}\}$ is an
orthonormal basis of $\mathcal{D}^{\theta}$ and $\{\bar{e}_1,...,\bar{e}_{m_2}\}$ is an orthonormal basis of
$\mathcal{D}^{\perp}$.  Hence we get
\begin{eqnarray}
&&\parallel h\parallel^2=\sum_{i,j}\sum_{\alpha}g(h(e_i,e_j),J\bar{e}_{\alpha})^2+\sum_{i,j}\sum_{k}g(h(e_i,e_j),F\tilde{e}_{k})^2\csc^2\theta\nonumber\\
&&+\sum_{\alpha,\beta,\gamma}g(h(\bar{e}_{\alpha},\bar{e}_{\beta}),J\bar{e}_{\gamma})^2+\sum_{\alpha,\beta}\sum_{k}g(h(\bar{e}_{\alpha},\bar{e}_{\beta}),F\tilde{e}_{k})^2\csc^2\theta\nonumber\\
&&+\parallel h(D^{\theta},D^{\theta})\parallel^2+2\{\sum_{i}\sum_{\alpha,\beta}g(h({e}_{i},\bar{e}_{\alpha}),J\bar{e}_{\beta})^2+\sum_{i}\sum_{\alpha}\sum_{k}g(h(e_i,\bar{e}_{\alpha}),F\tilde{e}_{k})^2\csc^2\theta\nonumber\\
&&+\sum_{i}\sum_{k}\sum_{\alpha}g(h({e}_{i},\tilde{e}_{k}),J\bar{e}_{\alpha})^2+\sum_{i}\sum_{k,l}g(h(\bar{e}_{i},\tilde{e}_{k}),F\tilde{e}_{l})^2\csc^2\theta\nonumber\\
&&+\sum_{i}\sum_{\alpha, \beta}\sum_{k}g(h(\bar{e}_{\alpha},\tilde{e}_{k}),J\bar{e}_{\beta})^2+\sum_{\alpha}\sum_{k,l}g(h(\bar{e}_{\alpha},\tilde{e}_{k}),F\tilde{e}_{l})^2\csc^2\theta\}.\nonumber
\end{eqnarray}
From (\ref{eq:3.4}), (\ref{eq:3.5}), (\ref{eq:3.6}), (\ref{eq:3.7}), (\ref{eq:3.8}) and (\ref{eq:3.9}), we get
\begin{eqnarray}
&&\parallel h\parallel^2=\sum_{\alpha,\beta,\gamma}g(h(\bar{e}_{\alpha},\bar{e}_{\beta}),J\bar{e}_{\gamma})^2+\sum_{\alpha,\beta}\sum_{k}g(h(\bar{e}_{\alpha},\bar{e}_{\beta}),F\tilde{e}_{k})^2\csc^2\theta+\parallel h(D^{\theta},D^{\theta})\parallel^2\nonumber\\
&&+2\{(-Je_i(lnf)g(\bar{e}_{\alpha},\bar{e}_{\beta}))^2+((-Je_i(ln h)g(\tilde{e}_{k},\tilde{e}_l)-e_i(ln h)g(\tilde{e}_k,T\tilde{e}_l))^2\csc^2\theta\nonumber\\
&&+\sum_{\alpha, \beta}\sum_{k}g(h(\bar{e}_{\alpha},\bar{e}_{\beta}),F\tilde{e}_{k})^2+\sum_{\alpha}\sum_{k,l}g(h(\bar{e}_{\alpha},\tilde{e}_{k}),F\tilde{e}_{l})^2\csc^2\theta\}.\nonumber
\end{eqnarray}
Hence by direct computation, using adapted slant frame for $M_{\theta}$, we arrive at
\begin{eqnarray}
&&\parallel h\parallel^2=\sum_{\alpha,\beta,\gamma}g(h(\bar{e}_{\alpha},\bar{e}_{\beta}),J\bar{e}_{\gamma})^2+\sum_{\alpha,\beta}\sum_{k}g(h(\bar{e}_{\alpha},\bar{e}_{\beta}),F\tilde{e}_{k})^2\csc^2\theta+\parallel h(D^{\theta},D^{\theta})\parallel^2\nonumber\\
&&+2\{\parallel \nabla ln f\parallel^2m_2+\csc^2\theta(\parallel \nabla^T ln h\parallel^2m_3+\cos^2\theta\parallel \nabla^T ln h\parallel^2m_3)\nonumber\\
&&+\sum_{\alpha, \beta}\sum_{k}g(h(\bar{e}_{\alpha},\bar{e}_{\beta}),F\tilde{e}_{k})^2+\sum_{\alpha}\sum_{k,l}g(h(\bar{e}_{\alpha},\tilde{e}_{k}),F\tilde{e}_{l})^2\csc^2\theta\}\nonumber
\end{eqnarray}
which is
\begin{eqnarray}
&&\parallel h\parallel^2=\parallel h(D^{\perp},D^{\perp})\parallel^2+2\{\parallel \nabla ln f\parallel^2m_2+m_3(1+\csc^2\theta)\parallel \nabla^T ln h\parallel^2\nonumber\\
&&+\sum_{\alpha, \beta}\sum_{k}g(h(\bar{e}_{\alpha},\bar{e}_{\beta}),F\tilde{e}_{k})^2+\sum_{\alpha}\sum_{k,l}g(h(\bar{e}_{\alpha},\tilde{e}_{k}),F\tilde{e}_{l})^2\csc^2\theta\}+\parallel h(D^{\theta},D^{\theta})\parallel^2.\nonumber
\end{eqnarray}
Thus we derive
\begin{equation}
\parallel h\parallel^2\geq2\{\parallel \nabla ln f\parallel^2m_2+m_3(1+\csc^2\theta)\parallel \nabla^T ln h\parallel^2\}.\label{eq:3.12}
\end{equation}
If the equality is satisfied in (\ref{eq:3.12}), we have
\begin{eqnarray}
\parallel h(D^{\theta},D^{\theta})\parallel^2=0&,&\parallel h(D^{\perp},D^{\perp})\parallel^2=0\label{eq:3.13}\\
\parallel h(D^{\perp},D^{\theta})\parallel^2=0&,&\parallel h(D^T,D^{\perp}) \parallel^2=0.\label{eq:3.14}
\end{eqnarray}
From Corollary \ref{Hiepko}, we know that $M_T\times_fM_{\perp}$ is totally godesic in $M$. Thus from Lemma \ref{Hiepko}, (\ref{eq:3.13}) and (\ref{eq:3.14}) we obtain that $M_T\times_fM_{\perp}$ is totally geodesic in $\bar{M}$. Also from Corollary \ref{Hiepko} tells us that $M_{\theta}$ is totally umbilical in $M$. If we denote the second fundamental forms of $M_{\theta}$ in $M$ and $\bar{M}$ by $h^M$ and $h'$, respectively, we have
$$h'(W_1,W_2)=h(W_1,W_2)+h^M(W_1,W_2)$$
for $W_1, W_2 \in \Gamma(TM_{\theta}$. Since $M_{\theta}$ is totally umbilical in $M$ and $h(W_1,W_2)=0$ due to (\ref{eq:3.13}), we get
$$h'(W_1,W_2)=h^M(W_1,W_2)=g(W_1,W_2)H'$$
where $H'$ is the mean curvature vector field of $M_{\theta}$ in $M$. This shows that $M_{\theta}$ is also totally umbilical in $\bar{M}$. On the other hand, by direct computations, using Proposition \ref{Connection} (2), we have
\begin{eqnarray}
g(h^M(W_1,W_2),X)&=&g(\nabla_{W_1}W_2,X)\nonumber\\
&=&-g(W_2,\nabla_{W_1}X)\nonumber\\
&=&-X(ln h)g(W_1,W_2)\nonumber
\end{eqnarray}
which implies that
\begin{equation}
g(h^M(W_1,W_2),X)=-g(\nabla ln h,X)g(W_1,W_2)\label{eq:3.15}
\end{equation}
for $X\in \Gamma(TM_T)$. In a similar way, we have
\begin{equation}
g(h^M(W_1,W_2),Z)=-g(\nabla ln h,Z)g(W_1,W_2)\label{eq:3.16}
\end{equation}
for $Z\in \Gamma(TM_{\perp})$. Thus,  for $V\in \Gamma(T(M_1\times M_2))$,  from (\ref{eq:3.15}) and (\ref{eq:3.16}), we derive
$$g(h^M(W_1,W_2),V)=-g(\nabla lnh ,V)g(W_1,W_2)$$
which shows that $M_{\theta}$ is totally umbilical in $\bar{M}$ with mean curvature vector field $-\nabla lnh$. (iii) and (iv) are clear from Lemma \ref{holomorphic}, (\ref{eq:3.13}) and (\ref{eq:3.14}). Thus proof is complete.
\end{proof}

 \section{Another inequality for sequential warped product submanifolds}
 \setcounter{equation}{0}
\renewcommand{\theequation}{4.\arabic{equation}}
Federer
and Fleming \cite{FF} showed that any non-trivial integral homology
class in $H_p(M,\mathbb{Z})$ corresponds to  a stable current. By using this result, Lawson and Simons \cite{LS} obtained that there are no stable
integral currents in the sphere $S^n$, and there is no integral
current in a submanifold $M^m$ of $S^n$ when the second fundamental
form of $M^m$ satisfies a pinching condition. More precisely, we have the following theorem.
\begin{theorem} \cite{LS}, \cite{Xin}
Let $M^n$ be a compact, $n-$ dimensional submanifold of the space
form $\bar{M}(c)$ of curvature $c\geq 0$ with second fundamental
form $h$, and let $p,q$ be positive integers such that $p+q=n$. If
for any $x \in M^n$ and any orthonormal basis $\{e_1,...,e_n\}$ of
the tangent space $T_xM^n$, the inequality
\begin{equation}
\sum^p_{i=1}\sum^n_{s=p+1}(2\parallel
h(e_i,e_s)\parallel^2-g(h(e_i,e_i),h(e_s,e_s)))<pqc \label{eq:2.8}
\end{equation}
is satisfied, then there are no stable $p$ currents in $M^n$.
Moreover, $H_p(M^n,\mathbb{Z})=0$, $H_q(M^n,\mathbb{Z})=0$, where
$H_i(M,\mathbb{Z})$ is the $i-$ th homology group of $M$ with
integer coefficients.
\end{theorem}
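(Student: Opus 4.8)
The statement is the theorem of Lawson and Simons \cite{LS} (see also Xin \cite{Xin}), so the plan is to reproduce their variational argument. First I would argue by contradiction: suppose $H_p(M^n,\mathbb{Z})\neq 0$. By the Federer--Fleming theorem \cite{FF} this class is carried by a $p$-dimensional integral current $T$ that minimizes mass in its homology class in $M$; in particular $T$ is stable, meaning that the second variation $Q(X)$ of the mass of $T$ is nonnegative for every vector field $X$ tangent to $M$. The aim is to extract from hypothesis (\ref{eq:2.8}) a deformation with $Q(X)<0$.

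The key step is the choice of test fields. Realize $\bar{M}(c)$, $c>0$, as the round sphere of radius $1/\sqrt{c}$ in $\mathbf{E}^{n+1}$ (for $c=0$ one works directly in $\mathbf{E}^{n}=\bar{M}(0)$, with only the constants changing). For $a\in\mathbf{E}^{n+1}$ let $\widehat{V}_a=a-c\langle a,x\rangle x$ be the component of the constant field $a$ tangent to $\bar{M}(c)$ (here $\langle\cdot,\cdot\rangle$ is the Euclidean inner product and $x$ the position vector), and let $V_a$ be the further orthogonal projection of $\widehat{V}_a$ onto $TM$. The essential property is $\bar{\nabla}_X\widehat{V}_a=-c\langle a,x\rangle X$ along $\bar{M}(c)$: the $\widehat{V}_a$ are conformal fields, and this, together with the space-form curvature tensor $\bar{R}(X,Y)Z=c\big(g(Y,Z)X-g(X,Z)Y\big)$ (the $J$-free case of (\ref{csf})) and the Gauss equation (\ref{eq:2.3}), is what makes the averaged second variation collapse. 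I would then write out $Q(V_a)$, keeping careful track of the normal component $\widehat{V}_a-V_a$, whose contribution to $Q$ is governed by the second fundamental form $h$ of $M$ in $\bar{M}(c)$.

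Next I would sum over an orthonormal basis $a_1,\dots,a_{n+1}$ of $\mathbf{E}^{n+1}$ and use the polarization identity $\sum_{j}\langle a_j,u\rangle\langle a_j,v\rangle=\langle u,v\rangle$. Fixing $x\in M$, an orthonormal frame $e_1,\dots,e_n$ of $T_xM$ with $e_1,\dots,e_p$ spanning the tangent $p$-plane of $T$ at $x$, and setting $q=n-p$, the tangential and mixed terms should cancel and leave the pointwise identity
\begin{align*}
&\sum_{j=1}^{n+1}\big(\text{integrand of }Q(V_{a_j})\big)\\
&\qquad=\sum_{i=1}^{p}\sum_{s=p+1}^{n}\Big(2\|h(e_i,e_s)\|^{2}-g\big(h(e_i,e_i),h(e_s,e_s)\big)\Big)-pqc .
\end{align*}
By (\ref{eq:2.8}) the right-hand side is negative at every point of $M$, so integrating over $\operatorname{spt}T$ gives $\sum_{j}Q(V_{a_j})<0$, whence $Q(V_{a_{j_0}})<0$ for some $j_0$, contradicting the stability of $T$. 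Therefore $H_p(M^n,\mathbb{Z})=0$. Finally, since $2\|h(e_i,e_s)\|^{2}-g(h(e_i,e_i),h(e_s,e_s))$ is symmetric in $i\leftrightarrow s$ and $pq=qp$, hypothesis (\ref{eq:2.8}) for $p$-currents coincides verbatim with the one for $q$-currents, and the same argument gives $H_q(M^n,\mathbb{Z})=0$.

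The main obstacle I expect is the explicit second-variation computation: writing down $Q(X)$ for the mass of a $p$-current under the flow of a tangent vector field, decomposing the ambient field relative to $M$, and verifying that after averaging over the $a_j$ all tangential and mixed terms drop out, so that exactly the above second-fundamental-form expression together with the term $-pqc$ remains. This should require nothing beyond $\bar{\nabla}_X\widehat{V}_a\propto X$, the Gauss--Codazzi equations, and the polarization identity for the $a_j$; the case $c=0$, and -- if one wishes to dispense with an orientability hypothesis -- the non-orientable case, need only cosmetic modifications, since Federer--Fleming applies to integral currents on any compact Riemannian manifold.
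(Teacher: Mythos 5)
The paper does not prove this theorem: it is quoted verbatim as a known result of Lawson--Simons \cite{LS} (see also \cite{Xin}) and used as motivation for the inequality in Section 4, so there is no in-paper proof to compare against. Your sketch is a faithful outline of the original Lawson--Simons argument: Federer--Fleming to produce a stable mass-minimizing integral current in a nontrivial class, the tangential projections of the conformal fields $\widehat{V}_a$ on the sphere as test fields, averaging the second variation of mass over an orthonormal basis of the ambient Euclidean space via polarization, and reading off the pointwise expression $\sum_{i}\sum_{s}(2\|h(e_i,e_s)\|^2-g(h(e_i,e_i),h(e_s,e_s)))-pqc$. Two caveats. First, the basis you average over should be an orthonormal basis of the Euclidean space containing $\bar{M}(c)$, i.e.\ of dimension $\dim\bar{M}+1$, not $n+1$; the polarization identity and the cancellation are unaffected, but as written the indexing is off. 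Second, and more substantively, the entire content of the theorem lives in the second-variation computation that you explicitly defer (``the main obstacle I expect''): deriving the second variation formula for the mass of a rectifiable current, decomposing $\widehat{V}_a$ into tangential and normal parts relative to $M$, and verifying that after summation only the stated second-fundamental-form expression and $-pqc$ survive. Until that identity is actually established, the argument is a correct plan rather than a proof. Note also that the theorem's first conclusion (nonexistence of stable $p$-currents) is proved directly for arbitrary integral $p$-currents by exhibiting a direction of negative averaged second variation, with the homology vanishing then following from Federer--Fleming; your contrapositive arrangement yields the homology statement but you should state the nonexistence claim in that direct form to recover the full assertion.
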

Thus the above  pinching condition on the second fundamental form gives very important information on the submanifold in hand. Inspiring from above theorem, in this section, we are going to obtain a similar condition on the second fundamental form of sequential warped product submanifold of the form $M_T\times_fM_{\perp}\times_hM_{\theta}$ and find certain restrictions on the warping functions $f$ and $f$ in the equality case.
\begin{theorem}
Let $M_T\times_fM_{\perp}\times_hM_{\theta}$ be an $(m_1+m_2+m_3)-$ dimensional  sequential warped product submanifold of a complex space form $\bar{M}^{m_1+2(m_2+m_3)}(c)$ such that $M_{\perp}$ is a totally real, $M_T$ is a holomorphic,  $M_{\theta}$ is a pointwise slant submanifold of $\bar{M}$.
For any $x \in M$  the inequality
\begin{eqnarray}
&&\sum^{m_2}_{\alpha=1}\sum^{m_3}_{k=1}(\parallel
h(\bar{e}_{\alpha},\tilde{e}_{k})\parallel^2-g(h(\tilde{e}_k,\tilde{e}_k),h(\bar{e}_{\alpha},\bar{e}_{\alpha})))\nonumber\\
&&+\sum^{m_1}_{i=1}(\sum^{m_2}_{\alpha=1}\parallel
h(e_i,\bar{e}_{\alpha})\parallel^2+\sum^{m_3}_{k=1}\parallel
h(e_i,\tilde{e}_{k})\parallel^2)\geq m_3(\frac{1}{h}\triangle^{\perp}ln h -m_2\frac{c}{4})\nonumber
\end{eqnarray}
is satisfied, where $\triangle^{\perp}h $ denotes the Laplacian of $h$ on $M_{\perp}$. The equality case is satisfied if and only if $f$ is constant and $h$ is constant on $M_T$. As a result of this, $M_T\times_fM_{\perp}$ becomes a CR-product and $M$ becomes a single warped product submanifold of the form $ M_T\times M_{\perp}\times_hM_{\theta}$ such that $h$ is a function on $M_{\perp}$.
\end{theorem}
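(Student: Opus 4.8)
The plan is to start from the Gauss equation for the submanifold $M$ in the complex space form $\bar M(c)$ and evaluate the mixed curvature terms that appear in the Lawson–Simons pinching expression, taking $p$-plane to be $D^{\perp}$ and $q$-plane to be $D^{\theta}$ — more precisely, I would sum the quantity $2\|h(e_i,e_s)\|^2 - g(h(e_i,e_i),h(e_s,e_s))$ over pairs with $e_i$ ranging over an orthonormal basis of $D^{\perp}\oplus D^T$ and $e_s$ over $D^{\theta}$, since the statement's left-hand side groups exactly these three families of terms $h(\bar e_\alpha,\tilde e_k)$, $h(e_i,\bar e_\alpha)$, $h(e_i,\tilde e_k)$. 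By the Gauss equation \eqref{eq:2.3}, the sum of the curvature quantities $\sum g(\bar R(e_i,e_s)e_s,e_i) - g(R(e_i,e_s)e_s,e_i)$ over these pairs equals precisely $\sum (2\|h(e_i,e_s)\|^2 - g(h(e_i,e_i),h(e_s,e_s)))$ up to the intrinsic curvature terms; so the first step is to compute the ambient side using \eqref{csf} and the intrinsic side using the sequential-warped-product connection formulas.

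The second step is the computation of $\sum g(\bar R(e_i,e_s)e_s,e_i)$. For $e_i\in D^T\cup D^\perp$ and $e_s\in D^\theta$, formula \eqref{csf} gives a term $\tfrac{c}{4}\{g(e_s,e_s)g(e_i,e_i) - 0 + g(Je_s,e_s)g(Je_i,e_i) - g(Je_i,e_s)g(Je_s,e_i) + 2g(e_i,Je_s)g(Je_s,e_i)\}$; after summing over the orthonormal bases, the dominant contribution is $\tfrac{c}{4}\,m_2 m_3$ coming from the pairs with $e_i\in D^\perp$ (the $D^T$ pairs contribute terms involving $g(Je_i,e_s)$ which should cancel or combine, and one must be careful with the $J$-terms since $D^\theta$ is only pointwise slant — but $g(Je_i,e_s)=g(TX$-type$)$ terms, and for $e_i$ holomorphic $Je_i\in D^T$ is orthogonal to $D^\theta$, so those vanish). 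The intrinsic side $\sum g(R(e_i,e_s)e_s,e_i)$ is evaluated via Proposition \ref{Connection}(4), which for a sequential warped product gives $\bar R(X_i,Y_3)Z_j = -\tfrac1h H^h(X_i,Z_j)Y_3$; summing the Hessian of $\ln h$ over the $D^\perp$ directions produces exactly the Laplacian term $\tfrac{1}{h}\triangle^\perp \ln h$ (with appropriate sign), multiplied by $m_3$, while over $D^T$ directions it produces $\|\nabla^T\ln h\|^2$-type contributions. Assembling these pieces and using \eqref{eq:2.3} yields the stated inequality once the nonnegative ``leftover'' second-fundamental-form squares (the $D^T$–$D^T$, $D^\perp$–$D^\perp$, $D^\theta$–$D^\theta$ blocks and the $F$-normal components controlled by Lemmas \ref{holomorphic}–\ref{eq:3.10}) are dropped.

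For the equality case, I would trace back which nonnegative terms were discarded: equality forces $h(D^T,D^\perp)=0$, $h(D^T,D^\theta)=0$, and the vanishing of the contributions governed by \eqref{eq:3.7} and \eqref{eq:3.9}, i.e. $Je_i(\ln f)=0$ for all $i$ and $\nabla^T\ln h = 0$. Since $D^T$ is $J$-invariant, $Je_i(\ln f)=0$ for all holomorphic directions means $X(\ln f)=0$ for all $X\in\Gamma(TM_T)$, so $f$ is constant on $M_T$; as $f$ is already a function on $M_T$ by definition, $f$ is globally constant. Likewise $\nabla^T\ln h=0$ says $h$ is constant along $M_T$, so $h$ is a function on $M_\perp$ only. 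Then $M_T\times_f M_\perp = M_T\times M_\perp$ is a Riemannian product with $M_T$ holomorphic and $M_\perp$ totally real, hence a CR-product in the sense of Chen, and $M = M_T\times M_\perp \times_h M_\theta$ with $h$ a function on $M_\perp$ — a single warped product. The converse (these conditions imply equality) follows by running the computation backward, since all discarded terms then vanish identically.

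The main obstacle I anticipate is the careful bookkeeping in step two: because $M_\theta$ is only \emph{pointwise} slant, the terms in \eqref{csf} involving $J\tilde e_k$ split as $T\tilde e_k + F\tilde e_k$ with $\|T\tilde e_k\|^2 = \cos^2\theta$, and one must verify that the $\theta$-dependent pieces either cancel against each other across the summation or get absorbed into the nonnegative second-fundamental-form remainder rather than corrupting the clean $m_2m_3\tfrac{c}{4}$ coefficient; keeping the signs straight in Proposition \ref{Connection}(4) when converting the ambient curvature into the Hessian/Laplacian of $\ln h$ is the other delicate point.
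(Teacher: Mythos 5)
Your proposal assembles the right ingredients (Gauss equation, the curvature formula \eqref{csf}, Proposition \ref{Connection}(4), and the lemmas of Section~3), but the organizing first step does not match the quantity the theorem actually bounds, and this is a genuine gap rather than a cosmetic one. You propose to sum the Lawson--Simons quantity $2\|h(e_i,e_s)\|^2-g(h(e_i,e_i),h(e_s,e_s))$ over pairs with $e_i\in D^T\oplus D^{\perp}$ and $e_s\in D^{\theta}$. That sum is not the left-hand side of the stated inequality: the theorem's expression carries coefficient $1$ (not $2$) on $\|h(\bar e_\alpha,\tilde e_k)\|^2$, contains the purely $D^T$--$D^{\perp}$ terms $\|h(e_i,\bar e_\alpha)\|^2$ (which your pairing, with second slot always in $D^{\theta}$, never produces), and contains no terms $g(h(e_i,e_i),h(\tilde e_k,\tilde e_k))$ with $e_i\in D^T$. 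Moreover, if you did run the Gauss equation over the $D^T$--$D^{\theta}$ pairs as you suggest, the ambient side contributes an extra $\tfrac{c}{4}m_1m_3$ and the intrinsic side an extra Hessian of $h$ over $D^T$ directions, neither of which appears in the stated inequality; these do not cancel, so your accounting would land on a different inequality.

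The paper's route, which you should follow, is narrower: apply the Gauss equation \emph{only} to the $D^{\perp}\times D^{\theta}$ pairs, where Proposition \ref{Connection}(4) converts the intrinsic curvature into the Hessian/Laplacian term $\tfrac1h\triangle^{\perp}\ln h$ and \eqref{csf} yields the $m_2m_3\tfrac{c}{4}$ term. The two remaining sums $\sum_{i,\alpha}\|h(e_i,\bar e_\alpha)\|^2$ and $\sum_{i,k}\|h(e_i,\tilde e_k)\|^2$ are not estimated via any Gauss equation at all; they are computed \emph{exactly} from \eqref{eq:3.6}--\eqref{eq:3.9} as $m_2\|\nabla\ln f\|^2$ and $m_3(1+\csc^2\theta)\|\nabla^T\ln h\|^2$. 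The resulting identity \eqref{eq:4.6} shows that the only slack in the inequality is $m_2\|\nabla\ln f\|^2+m_3(1+\csc^2\theta)\|\nabla^T\ln h\|^2\ge 0$; your claim that the inequality comes from discarding the diagonal blocks $h(D^T,D^T)$, $h(D^{\perp},D^{\perp})$, $h(D^{\theta},D^{\theta})$ is therefore incorrect --- those blocks never enter this computation. Your equality analysis happens to reach the right conclusion ($f$ constant, $h$ constant on $M_T$, hence a CR-product times a single warp), but only because the lemmas tie the mixed terms to the gradients; as written it is justified from the wrong list of discarded terms.
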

\begin{proof}
From (\ref{eq:2.3}), we have
\begin{eqnarray}
&&\sum^{m_2}_{\alpha=1}\sum^{m_3}_{k=1}g(\bar{R}(\tilde{e}_k,\bar{e}_{\alpha})\bar{e}_{\alpha},\tilde{e}_{k})=\sum^{m_2}_{\alpha=1}\sum^{m_3}_{k=1}(g({R}(\tilde{e}_k,\bar{e}_{\alpha})\bar{e}_{\alpha},\tilde{e}_{k})\nonumber\\
&&-g(h(\tilde{e}_k,\tilde{e}_k),h(\bar{e}_{\alpha},\bar{e}_{\alpha}))+\parallel h(\tilde{e}_k,\bar{e}_{\alpha})\parallel^2).\nonumber
\end{eqnarray}
Using Proposition \ref{Connection}(4), we get
\begin{eqnarray}
&&\sum^{m_2}_{\alpha=1}\sum^{m_3}_{k=1}g(\bar{R}(\tilde{e}_k,\bar{e}_{\alpha})\bar{e}_{\alpha},\tilde{e}_{k})=\sum^{m_2}_{\alpha=1}(\frac{1}{h}H^h(\bar{e}_{\alpha},\bar{e}_{\alpha})m_3\nonumber\\
&&\sum^{m_3}_{k=1}-g(h(\tilde{e}_k,\tilde{e}_k),h(\bar{e}_{\alpha},\bar{e}_{\alpha}))+\parallel h(\tilde{e}_k,\bar{e}_{\alpha})\parallel^2).\nonumber
\end{eqnarray}
From (\ref{csf}) we arrive at
\begin{eqnarray}
\sum^{m_2}_{\alpha=1}(\frac{1}{h}H^h(\bar{e}_{\alpha},\bar{e}_{\alpha})m_3&+&\sum^{m_3}_{k=1}(-g(h(\tilde{e}_k,\tilde{e}_k),h(\bar{e}_{\alpha},\bar{e}_{\alpha}))\nonumber\\
&+&\parallel h(\tilde{e}_k,\bar{e}_{\alpha})\parallel^2))=-m_2m_3\frac{c}{4}.\label{eq:4.3}
\end{eqnarray}
On the other hand, from (\ref{eq:3.6}), (\ref{eq:3.7}), (\ref{eq:3.8}) and (\ref{eq:3.9}), as we have found in the proof of previous theorem, we get
\begin{equation}
\sum^{m_1}_{i=1}\sum^{m_3}_{k=1}\parallel h(e_i,\tilde{e}_k)\parallel^2=m_3(1+\csc^2\theta)\parallel \nabla^T lnh\parallel^2 \label{eq:4.4}
\end{equation}
and
 \begin{equation}
\sum^{m_1}_{i=1}\sum^{m_2}_{\alpha=1}\parallel h(e_i,\bar{e}_{\alpha})\parallel^2=m_2\parallel \nabla lnf\parallel^2. \label{eq:4.5}
\end{equation}
Thus from (\ref{eq:4.3}), (\ref{eq:4.4}) and  (\ref{eq:4.5}) we obtain
\begin{eqnarray}
&&\sum^{m_2}_{\alpha=1}\sum^{m_3}_{k=1}(\parallel
h(\bar{e}_{\alpha},\tilde{e}_{k})\parallel^2-g(h(\tilde{e}_k,\tilde{e}_k),h(\bar{e}_{\alpha},\bar{e}_{\alpha})))+\sum^{m_1}_{i=1}(\sum^{m_2}_{\alpha=1}\parallel
h(e_i,\bar{e}_{\alpha})\parallel^2\nonumber\\
&&+\sum^{m_3}_{k=1}\parallel
h(e_i,\tilde{e}_{k})\parallel^2)=m_3((1+\csc^2\theta)\parallel \nabla^T lnh\parallel^2+\frac{1}{h}\triangle^{\perp} lnh-m_2\frac{c}{4})\nonumber\\
&&+m_2\parallel \nabla lnf\parallel^2.\label{eq:4.6}
\end{eqnarray}
(\ref{eq:4.6}) gives the inequality. If the equality is satisfed, then we have
$$m_3((1+\csc^2\theta)\parallel \nabla^T lnh\parallel^2)+m_2\parallel \nabla lnf\parallel^2=0$$
which implies that $f$ and $h$ are constant functions on $M_T$. This completes proof.
\end{proof}

\baselineskip=16pt
\end{document}